\newtheorem{theorem}{Theorem}[section]
\newtheorem{corollary}[theorem]{Corollary}
\numberwithin{equation}{section}
\newcommand{\proof}{\noindent {\bf Proof:~}}
\newcommand{\qed}{\hfill {\bf QED}}
\newcommand{\R}{\mathbb{R}}
\newcommand{\Z}{\mathbb{Z}}
\newcommand{\T}{\mathbb{T}}
\newcommand{\C}{\mathbb{C}}
\newcommand{\eps}{\varepsilon}
\renewcommand{\phi}{\varphi}
\newcommand{\FD}[2]{\frac{d {#1}}{d {#2}}}
\begin{document}

\title{Hopf normal form with $S_N$ symmetry and reduction to systems of nonlinearly coupled phase oscillators}

\author{Peter Ashwin\thanks{Corresponding author: {\tt p.ashwin@exeter.ac.uk}, +441392725225} and Ana Rodrigues\\Department of Mathematics\\University of Exeter\\Exeter EX4 4QF, UK}

\maketitle

\begin{abstract}
Coupled oscillator models where $N$ oscillators are identical and symmetrically coupled to all others with full permutation symmetry $S_N$ are found in a  variety of applications. Much, but not all, work on phase descriptions of such systems consider the special case of pairwise coupling between oscillators. In this paper, we show this is restrictive - and we characterise generic multi-way interactions between oscillators that are typically present, except at the very lowest order near a Hopf bifurcation where the oscillations emerge. We examine a network of identical weakly coupled dynamical systems that are close to a supercritical Hopf bifurcation by considering two parameters, $\epsilon$ (the strength of coupling) and $\lambda$ (an unfolding parameter for the Hopf bifurcation). For small enough $\lambda>0$ there is an attractor that is the product of $N$ stable limit cycles; this persists as a normally hyperbolic invariant torus for sufficiently small $\epsilon>0$. Using equivariant normal form theory, we derive a generic normal form for a system of coupled phase oscillators with $S_N$ symmetry. For fixed $N$ and taking the limit  $0<\epsilon\ll\lambda\ll 1$, we show that the attracting dynamics of the system on the torus can be well approximated by a coupled phase oscillator system that, to lowest order, is the well-known Kuramoto-Sakaguchi system of coupled oscillators. The next order of approximation genericlly includes terms with up to four interacting phases, regardless of $N$. Using a normalization that maintains nontrivial interactions in the limit $N\rightarrow \infty$, we show that the additional terms can lead to new phenomena in terms of coexistence of two-cluster states with the same phase difference but different cluster size.
\end{abstract}

\section{Introduction}

Coupled oscillator models are used in a wide variety of applications. They appear in neuroscience for studying neuronal oscillation patterns in the brain (see for example \cite{AshCooNic16,Farmer02,Gray94,Sin93}); in chemistry (see for example \cite{TTZS09,TNS12}) and in physics (see for example \cite{Acebron05,WS95}). A powerful method for understanding the dynamics of coupled oscillators comes from noting that $N$ limit cycle oscillators give rise to a normally hyperbolic invariant torus that persists for weak enough coupling \cite{AshSwi91}. In such cases we can describe the asymptotic dynamics in terms of just phases. A specific example of a coupled identical phase oscillator system with global (all-to-all) coupling is that of Kuramoto \cite{Kuramoto1975}
\begin{equation}
\FD{}{t} {\phi}_j = \omega + \frac{K}{N} \sum_{k=1}^{N} g(\phi_k-\phi_j),
\label{eq:TNkuramoto}
\end{equation}
with fixed natural frequency $\omega$, coupling strength $K>0$. Although the original work of Kuramoto considered $g(\phi)=\sin(\phi)$, a more general ``Kuramoto-Sakaguchi'' coupling (phase interaction) function \cite{sakaguchi_kuramoto_86} is
\begin{equation}
g(\phi) = \sin(\phi+\alpha).
\label{eq:kuramotocoupling}
\end{equation}
For the system (\ref{eq:TNkuramoto},\ref{eq:kuramotocoupling}), the only attractors are full synchrony or full asynchrony, depending on the value of the parameter $\alpha$, while in the special case $\cos(\alpha)=0$ the system is integrable. Many papers have studied the dynamics of this and related systems; see for example \cite{Acebron05,Strogatz2000}. For this permutation symmetric case of identical oscillators, the system above is known to behave in ways that are not generic, even accounting for symmetries. There can be a large number of integrals of the motion \cite{Watanabe1994} and degenerate bifurcation behaviour on varying $\alpha$ \cite{AshBur2015}. As pointed out in \cite{KorKruJaiKisHud14}, for weak linear coupling of nonlinear systems near Hopf bifurcation, one expects to have a coupling function $g$ that is smooth and $2\pi$-periodic \cite{AshSwi91,Dai96}. That is, the generic situation is that all $A_k$ will be non-zero in the Fourier expansion
\begin{equation}
g(\phi)= \sum_{k=0}^{\infty} A_k \sin(k\phi+\chi_k)
\end{equation}
where the $A_k$ will decay with $k$ at a rate that will depend on the smoothness of $g$.

However, recent work by Rosenblum, Pikovsky and co-workers has highlighted that more complex interactions may be present in coupled oscillator systems, and that this may lead to new emergent phenomena such as self-organized quasiperiodicity \cite{RosPik2007} on including an additional damped equation, or on including direct dependence of a phase shift $\chi_k$ on an order parameter \cite{BurPik11,PikRos2009}.

The current paper considers generic nonlinear, but fully permutation symmetric, weak coupling of $N$ identical Hopf bifurcations. We show, by examining a generic normal form for equivariant Hopf bifurcation and unfolding parameter $\lambda$, that the system has an attracting invariant torus for  $0<\epsilon\ll \lambda\ll 1$. On this torus, the flow can be approximated by (\ref{eq:TNkuramoto},\ref{eq:kuramotocoupling}) at lowest order, but over a longer timescale it can be better approximated by a system of the form
\begin{equation}\label{principalEQ}
\begin{split}
\dfrac{d}{dt} {\phi}_j = & \tilde{\Omega}(\phi,\epsilon)  + \frac{\epsilon}{N}\sum_{k=1}^N g_2(\phi_k-\phi_j)+\frac{\epsilon}{N^2}\sum_{k,\ell=1}^{N}g_3(\phi_k+\phi_{\ell}-2\phi_j) \\ 
& + \frac{\epsilon}{N^2}\sum_{k,\ell=1}^{N} g_4(2\phi_k-\phi_{\ell}-\phi_j) + \frac{\epsilon}{N^3}\sum_{k,\ell,m=1}^{N} g_5(\phi_k+\phi_{\ell}-\phi_{m}-\phi_j).
\end{split}
\end{equation}
The frequency $\tilde{\Omega}(\phi,\epsilon)$ is a symmetric function of the phases that is close to the frequency at Hopf bifurcation of the uncoupled system, and we have coupling via
\begin{equation}
\begin{split}
g_2(\phi)= & \xi_1 \cos (\phi+\chi_1)+ \xi_2\cos (2\phi+\chi_2)\\
g_3(\phi)= & \xi_3 \cos (\phi+\chi_3)\\
g_4(\phi)= & \xi_4 \cos (\phi+\chi_4)\\
g_5(\phi)= & \xi_5 \cos (\phi+\chi_5)
\end{split}
\label{eq:coupform}
\end{equation}
where $\xi_j$ and $\chi_j$ depend on $\lambda$. More precisely, they are determined by 
\begin{equation}
\begin{split}
g_2(\phi)= & \xi_1^0 \cos (\phi+\chi_1^0)+\lambda \xi_1^1 \cos (\phi+\chi_1^1)+ \lambda\xi_2^1\cos (2\phi+\chi_2^1)\\
g_3(\phi)= & \lambda\xi_3^1 \cos (\phi+\chi_3^1)\\
g_4(\phi)= & \lambda\xi_4^1 \cos (\phi+\chi_4^1)\\
g_5(\phi)= & \lambda\xi_5^1 \cos (\phi+\chi_5^1)
\end{split}
\label{eq:coupform2}
\end{equation}
for some constant coefficients $\xi_i^j$ and $\chi_i^j$. A more precise statement that includes the suppressed higher order terms is given in Theorem~\ref{thm:main} and Corollary~\ref{cor:main}. Most of the discussion, apart from Section~\ref{sec:twocluster}, will assume $N$ is fixed, but we assume the given normalization in (\ref{principalEQ}) of the sums by $N$, $N^2$ or $N^3$ to ensure non-trivial coupling in the thermodynamic limit $N\rightarrow\infty$. 

Including only the very lowest order terms, we will see that (\ref{principalEQ}) reduces to (\ref{eq:TNkuramoto}) with coupling (\ref{eq:kuramotocoupling}):
\begin{equation}\label{principalEQlambdazero}
\dfrac{d}{dt} {\phi}_j = \Omega  + \frac{\epsilon}{N}\sum_{k=1}^N \xi_1^0 \cos (\phi_k-\phi_j+\chi_1^0). 
\end{equation}
with $\Omega$, $\xi_1^0$ and $\chi_1^0$ constants. As (\ref{principalEQ})  shows, to the next order we only need to consider interaction terms of up to four phases. Each of the smooth periodic functions $g_k(\phi)$ for $k=1,\ldots,5$ involves only one Fourier mode, except for $g_2$ which involve two, and $\tilde{\Omega}(\phi,\epsilon)$ is a symmetric function of the phases. Note that \eqref{principalEQ} has $S^1$ normal form symmetry
$$
(\phi_1,\ldots,\phi_N) \mapsto (\phi_1+\chi,\ldots,\phi_N+\chi) 
$$
for any $\chi\in\T$, in addition to the permutation symmetries $S_N$.

The structure of the paper is as follows. In Section~\ref{sec:equivariant} we give an outline of the normal form theory for $S_N$ equivariant Hopf bifurcation on $\C^N$ where $S_N$ acts naturally by permutation of coordinates. This action decomposes into two irreducible subspaces of complex type, one of dimension one (corresponding to bifurcation to in-phase oscillation) and one of dimension $N-1$ (corresponding to bifurcation to anti-phase oscillation). We include two bifurcation parameters, $\lambda$ determining the Hopf bifurcation $\epsilon$ representing the strength of coupling, in regimes where both are small.

Section~\ref{sec:reduction} considers a set of coupled systems undergoing a generic supercritical Hopf bifurcation in the case of weak coupling $0<\epsilon\ll\lambda\ll 1$. The main result is Theorem~\ref{thm:main} which is proved in Section~\ref{sec:mainproof} by performing an explicit reduction of the normal form to an invariant $N$-torus represented by coupled phase oscillators. Section~\ref{sec:examples} briefly considers a numerical example of the reduction as well as discussions of the consequences of the new interaction terms on fully synchronous and two-cluster states. The new terms introduce a particular (quadratic) nonlinearity to the equations for the phases of two cluster states, and in Theorem~\ref{thm:twoclusters} we detail a particular new phenomenon. Finally, in Section~\ref{sec:discussion} we discuss some implications of this on the dynamics of all-to-all coupled oscillators near Hopf bifurcation, and we relate to other work in the literature that considers more general nonlinear coupling between oscillators.

\section{Equivariant Hopf bifurcation with $S_N$ symmetry}
\label{sec:equivariant}

Suppose we have $N$ identical and identically interacting smooth ($C^{\infty}$) dynamical systems on $x_i\in\R^d$ ($d\geq 2$), generated by the following coupled ordinary differential equations:
\begin{eqnarray}
\FD{}{t} {x}_1 &=& H_{\lambda}(x_1)+\epsilon h_{\lambda,\epsilon}(x_1;x_2,\ldots,x_N)\nonumber\\
\vdots & \vdots & \vdots \label{eq:Rdsystem}\\
\FD{}{t} {x}_N &=& H_{\lambda}(X_N)+\epsilon h_{\lambda,\epsilon}(x_N;x_1,\ldots,x_{N-1}).\nonumber
\end{eqnarray}
The ``coupling parameter'' $\epsilon\in\R$ is such that the system completely decouples for $\epsilon=0$. We also assume that each system undergoes a Hopf bifurcation of $x=0$ when a ``Hopf parameter'' $\lambda\in\R$ passes through zero for $\epsilon=0$. 

Without loss of generality we assume that the uncoupled system for $x\in\R^d$ given by
\begin{equation}
\FD{}{t} {x} =  H_{\lambda}(x),
\end{equation}
has a linearly stable fixed point at $x=0$ for $\lambda<0$ that undergoes Hopf bifurcation at $\lambda=0$. Without loss of generality, we can assume that $DH_{\lambda}(0)$ has a complex pair of eigenvalues
$$
\lambda\pm i\omega
$$
where $\omega\neq 0$ and all other eigenvalues $\mu$ of $DH_{\lambda}(0)$ satisfy $Re(\mu)<-r<0$.

We also assume that, without loss of generality, $(x_1,\ldots,x_N)=0$ is an equilibrium for $(\lambda,\epsilon)$ in some neighbourhood of $(0,0)$. As we will be interested in attracting behaviour near bifurcation we assume that the bifurcation is supercritical, i.e. it gives rise to a small amplitude stable limit cycle for $\lambda>0$.

Note that the Jacobian of \eqref{eq:Rdsystem} at $(x_1,\ldots,x_N)=0$ will have the form
\begin{equation}
\left(\begin{array}{cccc}
DH_{\lambda}(0)+\epsilon d_1h_{\lambda,\epsilon}(0) & \epsilon d_2h_{\lambda,\epsilon}(0) & \cdots & \epsilon d_2h_{\lambda,\epsilon}(0) \\
\\
\epsilon d_2h_{\lambda,\epsilon}(0) & DH_{\lambda}(0)+\epsilon d_1h_{\lambda,\epsilon}(0)  & \cdots & \epsilon d_2h_{\lambda,\epsilon}(0) \\
\vdots & \vdots & \ddots & \vdots \\
\epsilon d_2h_{\lambda,\epsilon}(0) & \epsilon d_2h_{\lambda,\epsilon}(0)  & \cdots & DH_{\lambda}(0)+\epsilon d_1h_{\lambda,\epsilon}(0)  \end{array}\right)
\end{equation}
where $d_k$ represents the Jacobian with respect to the $k$th argument.

We assume that the coupling respects the fact that the uncoupled systems can be permuted arbitrarily, i.e. that the system is equivariant under the action of $S_N$ on $\R^{dN}$ by permutation
\begin{equation}\label{eqxaction1}
\sigma (x_1,\ldots, x_N) = (x_{\sigma^{-1}(1)}, \ldots, x_{\sigma^{-1}(N)}),
\end{equation}
for any $(x_1,\ldots,x_N) \in \R^{dN}$ and $\sigma \in S_N$ .

Although Hopf bifurcation in the absence of symmetry can generically be reduced to a two dimensional centre manifold, this is not the case here - the action of the symmetry group $S_N$ means that for $\epsilon>0$ the centre manifold at generic bifurcation will generically be either $2$ dimensional or $2N-2$ dimensional. In the uncoupled case $\lambda=\epsilon=0$ the extra structure means that the centre manifold will be $2N$ dimensional.

\section{Hopf normal form for a weakly coupled system}
\label{sec:reduction}

Using equivariant bifurcation theory \cite{GSS88} it is possible to write the system of ODEs (\ref{eq:Rdsystem}) on a centre manifold $(z_1,\ldots,z_N)\in\C^N$ where in the case $\lambda=\epsilon=0$ the centre manifold in each coordinate $x_k$ is parametrized by $z_k$. This system on the centre manifold is
\begin{eqnarray}
\FD{}{t} {z}_1&=& f_{\lambda}(z_1)+ \epsilon g_{\lambda}(z_1;z_2,\ldots,z_N)+O(\epsilon^2)\nonumber\\
\vdots & \vdots & \vdots \label{eq:CNsystem}\\
\FD{}{t} {z}_N&=& f_{\lambda}(z_N)+ \epsilon g_{\lambda}(z_N;z_1,\ldots,z_{N-1})+O(\epsilon^2)\nonumber
\end{eqnarray}
where $z \in {\mathbb C}^{N}$ and we note the right hand sides can be chosen to be $C^r$, with $r$ arbitrarily large, in a neighbourhood of the bifurcation. The conditions for Hopf bifurcation mean that for \eqref{eq:CNsystem} we have $f_{0}(0)=0$ and $df_{0}(0)$ has a pair of purely imaginary eigenvalues $\pm i\omega$ that pass transversely through the imaginary axis with non-zero speed on changing $\lambda$. The action of $S_N$ on $\C^N$ where $\sigma \in S_N$ acts by permutation of coordinates
\begin{equation}\label{eqaction1}
\sigma (z_1,\ldots, z_N) = (z_{\sigma^{-1}(1)}, \ldots, z_{\sigma^{-1}(N)}),
\end{equation}
where $(z_1,\ldots,z_N) \in \C^N$ and so $g_{\lambda}(z_1;z_2,\ldots,z_N)$ is symmetric under all permutations of the last $N-1$ arguments  that fix the first.

Poincar\'{e}-Birkhoff normal form theory means that to all polynomial orders we can assume there is a normal form symmetry given by the action of $S^{1}$ on $ {\mathbb C}^{N}$
\begin{equation}\label{eqaction2}
\theta (z_1,\ldots, z_N) = e^{i \theta}  (z_1,\ldots, z_N).
\end{equation}

The symmetries \eqref{eqaction1}, \eqref{eqaction2} restrict the possible terms that can appear in the normal form; we can characterise these by finding the possible equivariants, one order at a time. This can be expressed in the following form which can be recovered from \cite{DR05}, where $\sum_{i}$ denotes $\sum_{i=1}^N$, $\sum_{i,j}$ denotes $\sum_{i=1}^N\sum_{j=1}^N$ and  $\sum_{i,j,k}$ denotes $\sum_{i=1}^N\sum_{j=1}^N\sum_{k=1}^N$. 

\begin{theorem}\label{thm:expand}
Suppose $N \geq 4$. Let $f:{\mathbb C}^{N} \rightarrow {\mathbb C}^{N}$
be $S_{N} \times S^{1}$-equivariant with respect
to the action  \eqref{eqaction1}, \eqref{eqaction2} with polynomial components of 
degree lower or equal than $3$. Then $f= (f_1, f_2, \ldots, f_N)$ where
\begin{equation}
\begin{split}
 f_{1}(z_{1},z_{2},\ldots,z_{N})=&
 \sum_{i=-1}^{11}a_{i}h_{i}(z_{1},z_{2},\ldots,z_{N})\\
 f_{2}(z_{1},z_{2},\ldots,z_{N})=&
 f_{1}(z_{2},z_{1},\ldots,z_{N})\\
 & \vdots  \\
 f_{N}(z_{1},z_{2},\ldots,z_{N})=& 
f_{1}(z_{N},z_{2},\ldots,z_{1})
\end{split}
\end{equation}
\noindent and
\begin{equation}\label{eqinv5}
\begin{array}{ll}
h_{-1}(z)~=~\frac{1}{N}\sum_{j}z_{j}, \qquad
& h_{0}(z)~=~z_{1},\\
 h_{1}(z)~=~ |z_{1}|^{2}z_{1} & \\
h_{2}(z)~=~  z_{1}^{2}\frac{1}{N}\sum_{j}{{\overline
z}_{j}},\qquad & 
h_{3}(z)~=~ |z_{1}|^{2}\frac{1}{N}\sum_{i}{z_{i}}, \\
h_{4}(z)~=~  z_{1}\frac{1}{N}\sum_{k}{|z_{k}|^{2}},\quad &
h_{5}(z)~=~
 z_{1}\frac{1}{N^2}\sum_{i,k}{z_{i}}{\overline z}_{k}, \\
h_{6}(z)~=~  {\overline z}_{1}\frac{1}{N}\sum_{j}{
z_{j}^{2}},\qquad & h_{7}(z)~=~  {\overline
z}_{1}\frac{1}{N}\sum_{i,j}{z_{i} z_{j}},\\
h_{8}(z)~=~  \frac{1}{N}\sum_{j}{|z_{j}|^{2}z_{j}}, \qquad &
h_{9}(z)~=~ \frac{1}{N^2} \sum_{i,j}{z_{i}^{2}}{{\overline z}_{k}},\\
h_{10}(z)~=~ \frac{1}{N^2} \sum_{i,k}{z_{i}}{|z_{k}|^{2}}, \qquad &
h_{11}(z)~=~ \frac{1}{N^3} \sum_{i,j,k}{z_{i}}{z_{j}}{{\overline z}_{k}}, \\
\end{array}
\end{equation}
\noindent for constants $a_{j} \in {\mathbb C}$. Also we denote
$|z_j|^2 = z_j \overline{z}_j$ for $j=1, \ldots, N$.
\end{theorem}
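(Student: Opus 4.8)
The plan is to exploit the two symmetries in turn: first use $S_N$-equivariance to reduce the problem to a single scalar function, then use the $S^1$ normal form symmetry to cut down the admissible monomials, and finally enumerate the surviving invariant monomial types and match them to the listed $h_i$. For the first step I would record the standard consequence of $S_N$-equivariance for an all-to-all coupled system. Writing $f=(f_1,\ldots,f_N)$, the condition $f(\sigma z)=\sigma f(z)$ for the action \eqref{eqaction1} is equivalent, componentwise, to $f_i(\sigma z)=f_{\sigma^{-1}(i)}(z)$. Taking $\sigma$ in the stabilizer $S_{N-1}$ of the index $1$ shows that $f_1$ is invariant under all permutations of $z_2,\ldots,z_N$; taking $\sigma$ to be the transposition $(1\,i)$ shows $f_i(z)=f_1((1\,i)z)$, which is exactly the relation $f_i(z_1,\ldots,z_N)=f_1(z_i,\ldots,z_1,\ldots)$ displayed in the statement. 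Hence it suffices to determine the $S_{N-1}$-invariant scalar functions $f_1$ of the right polynomial type, where $S_{N-1}$ permutes $z_2,\ldots,z_N$ and fixes $z_1$.

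Next I would impose the $S^1$ symmetry \eqref{eqaction2}. Writing a monomial in $z_1,\ldots,z_N,\ol z_1,\ldots,\ol z_N$ as $z^{\alpha}\ol z^{\beta}$, the requirement $f_1(e^{i\theta}z)=e^{i\theta}f_1(z)$ forces $|\alpha|-|\beta|=1$ for every monomial that appears. Combined with total degree $|\alpha|+|\beta|\le 3$, this leaves only $|\alpha|=1,\,|\beta|=0$ (linear) and $|\alpha|=2,\,|\beta|=1$ (cubic); in particular no constant or quadratic terms survive, which is why only odd-degree equivariants occur. The linear $S_{N-1}$-invariants are spanned by $z_1$ and $\sum_{j}z_j$, giving $h_0$ and $h_{-1}$.

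It then remains to enumerate the cubic case, a monomial $z_a z_b \ol z_c$ with two holomorphic indices $\{a,b\}$ and one antiholomorphic index $c$. I would organise the $S_{N-1}$-orbits by how many of $a,b,c$ equal the distinguished index $1$ and by the pattern of coincidences among the remaining indices: three $1$'s give $h_1$; two $1$'s give $h_2$ and $h_3$; one $1$ gives $h_4,h_5$ (distinguished index holomorphic) and $h_6,h_7$ (antiholomorphic); and no $1$'s give the four fully symmetric types $h_8,h_9,h_{10},h_{11}$. The $S_{N-1}$-orbit sum of each type is an invariant, and since orbit sums form a basis of the invariants in each multidegree, every admissible $f_1$ is a linear combination of them. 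The $h_i$ as written use the unrestricted sums $\sum_i,\sum_{i,j},\sum_{i,j,k}$, but these differ from the clean orbit sums only by terms of lower coincidence pattern already on the list, so the two spanning sets coincide. This yields the thirteen equivariants $h_{-1},\ldots,h_{11}$.

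The step I expect to be the real obstacle, and where the hypothesis $N\ge 4$ enters, is verifying that these thirteen equivariants are genuinely independent generators rather than collapsing under relations. The most demanding patterns are those requiring several distinct non-distinguished indices: $h_{11}$ needs three distinct indices in $\{2,\ldots,N\}$, while $h_5,h_7,h_9,h_{10}$ each need two, so only for $N-1\ge 3$, i.e. $N\ge 4$, are all the orbit types simultaneously nonempty and distinct. For $N\le 3$ some types vanish or coincide and the list genuinely shortens. Checking this linear independence — equivalently, that no nontrivial combination of the orbit sums vanishes identically for $N\ge 4$ — is the one place requiring care; it can be done by evaluating on generic test points $z$ chosen to separate the patterns, or by specialising the classification in \cite{DR05}.
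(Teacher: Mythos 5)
Your proposal is correct, and it necessarily takes a different route from the paper, because the paper offers no argument of its own: its proof of Theorem~\ref{thm:expand} is a one-line citation to \cite[Section 2.1.2]{DR05}. Your reduction is the natural one and is carried out soundly: $S_N$-equivariance, applied first to the stabilizer of the index $1$ and then to the transpositions $(1\,i)$, shows that $f$ is determined by a single component $f_1$ that is invariant under permutations of $z_2,\ldots,z_N$; the $S^1$ action forces $|\alpha|-|\beta|=1$ on every monomial $z^\alpha\bar{z}^\beta$, which together with degree $\le 3$ leaves only the linear type and the cubic type $z_az_b\bar{z}_c$; and organising the $S_{N-1}$-orbits by coincidence patterns relative to the distinguished index yields exactly the thirteen classes matching $h_{-1},\ldots,h_{11}$. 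Your remark that the unrestricted sums in \eqref{eqinv5} differ from the genuine orbit sums by a triangular change of generators (invertible precisely when every leading orbit is nonempty) is the correct way to pass between the two spanning sets, and it is also where the hypothesis $N\ge 4$ enters, as you identify. The one soft spot is that you defer the linear-independence verification; but note that the theorem as stated asserts only the \emph{existence} of coefficients $a_j$, for which spanning suffices, so your argument already proves the stated claim — independence is what makes the $a_j$ unique and the generating list minimal, and that is the part the paper obtains for free from the classification in \cite{DR05}. In short, the paper's citation buys brevity plus minimality of the generator set, while your argument buys a self-contained, elementary proof of the representation itself, very likely close in spirit to the enumeration performed in the cited reference.
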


\begin{proof} For details, see \cite[Section 2.1.2]{DR05}.
\end{proof}

~

We summarise so far: if system \eqref{eq:Rdsystem} is such that (a) the system decouples for $\epsilon=0$ and (b) for $\epsilon=0$ each system has a generic Hopf bifurcation at $\lambda=0$, $x=0$, then near $\lambda=\epsilon=x=0$ the dynamics can be written on a centre manifold of dimension $2N$ as \eqref{eq:CNsystem}. We now state the main result of our paper:

\newpage

\begin{theorem}\label{thm:main}
Consider system \eqref{eq:CNsystem} with $S_N$-symmetry (for fixed $N$) such that the $N$ uncoupled systems ($\epsilon=0$) undergo a generic supercritical Hopf bifurcation on $\lambda$ passing through $0$. There exists $\lambda_0>0$ and $\epsilon_0=\epsilon_0(\lambda)$ such that for any $\lambda \in (0,\lambda_0)$ and $|\epsilon|<\epsilon_0(\lambda)$ the system \eqref{eq:CNsystem} has an attracting $C^r$-smooth invariant $N$-dimensional torus for arbitrarily large $r$.

Moreover, on this invariant torus, the phases $\phi_j$ of the flow can be expressed as a coupled oscillator system
\begin{equation}\label{principalEQ2}
\begin{split}
\FD{}{t} {\phi}_j = &  \tilde{\Omega}(\phi,\epsilon) + \frac{\epsilon}{N}\sum_{k=1}^N g_2(\phi_k-\phi_j)+\frac{\epsilon}{N^2}\sum_{k,\ell=1}^{N}g_3(\phi_k+\phi_{\ell}-2\phi_j) \\ 
& + \frac{\epsilon}{N^2}\sum_{k,\ell=1}^{N} g_4(2\phi_k-\phi_{\ell}-\phi_j) + \frac{\epsilon}{N^3}\sum_{k,\ell,m=1}^{N} g_5(\phi_k+\phi_{\ell}-\phi_{m}-\phi_j) \\
& +\epsilon \tilde{g}_j(\phi_1,\ldots,\phi_N)+O(\epsilon^2)
\end{split}
\end{equation}
for fixed $0<\lambda<\lambda_0$ in the limit $\epsilon\rightarrow 0$, where $\tilde{\Omega}(\phi,\epsilon)$ is independent of $j$ and 
\begin{equation}
\begin{split}
g_2(\phi)= & \xi_1^0 \cos (\phi+\chi_1^1 ) +\lambda\xi_1^1 \cos (\phi+\chi_1^1)+ \lambda \xi_2^1 \cos (2\phi+\chi_2^1)\\
g_3(\phi)= & \lambda \xi_3^1 \cos (\phi+\chi_3^1)\\
g_4(\phi)= & \lambda \xi_4^1 \cos (\phi+\chi_4^1)\\
g_5(\phi)= & \lambda \xi_5^1 \cos (\phi+\chi_5^1).
\end{split}
\label{eq:coupform2}
\end{equation}
The constants $\xi_i^j$ and $\chi_i^j$ are generically non-zero. 
The error term satisfies
$$
\tilde{g}(\phi_1,\ldots,\phi_N)= O(\lambda^2)
$$
uniformly in the phases $\phi_k$. The truncation of (\ref{principalEQ2}) by removing $\tilde{g}$ and $O(\epsilon^2)$ terms is valid over time intervals $0<t<\tilde{t}$ where $\tilde{t}=O(\epsilon^{-1}\lambda^{-2})$ in the  limit $0< \epsilon\ll\lambda\ll 1$. In particular, for any $N$, this approximation involves up to four interacting phases.
\end{theorem}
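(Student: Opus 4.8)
The plan is to split the argument into three stages: (i) constructing the attracting invariant torus, (ii) reducing the normal-form flow to phase coordinates and identifying the coupling functions, and (iii) controlling the remainder and the timescale of validity.

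\emph{Stage (i): persistence of the torus.} At $\epsilon=0$ the system \eqref{eq:CNsystem} decouples into $N$ copies of $\FD{z}{t}=f_{\lambda}(z)$. Supercriticality gives, for each $0<\lambda<\lambda_0$, a hyperbolically stable limit cycle $\Gamma_{\lambda}$ of amplitude $r_*=O(\sqrt{\lambda})$, and the product $\Gamma_{\lambda}^{\times N}\subset\C^N$ is an attracting normally hyperbolic invariant $N$-torus. The tangential flow is a near-rigid rotation (zero tangential Lyapunov exponents), while the normal contraction rate is $O(\lambda)$, obtained by linearising the amplitude equation $\dot{r}=\lambda r+\Re(a_1)r^3$ at $r_*$ (derivative $-2\lambda$, using $\Re(a_1)<0$). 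The spectral-gap condition for $C^r$ persistence therefore holds for each fixed $\lambda$, and by the normally hyperbolic invariant manifold theorem (Fenichel / Hirsch--Pugh--Shub) the torus persists as a $C^r$ attracting invariant torus $\T^N_{\epsilon}$ for $|\epsilon|<\epsilon_0(\lambda)$; since the normal rate scales like $\lambda$, the admissible perturbation size $\epsilon_0(\lambda)$ shrinks as $\lambda\to 0$.

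\emph{Stage (ii): phase reduction.} Writing $z_j=r_j e^{i\phi_j}$ and using the $S^1$ normal-form symmetry, the invariant torus is a graph $r_j=r_*(\lambda)+O(\epsilon)$ over the phases, so the amplitudes are slaved and it suffices to evaluate $\FD{\phi_j}{t}=r_j^{-1}\Im(\dot{z}_j e^{-i\phi_j})$ along this graph. I would substitute the degree-$\leq 3$ equivariant expansion of Theorem~\ref{thm:expand} and compute the phase contribution of each $h_i$ at equal amplitudes $r_j=r_*$. Since the coupling equivariants carry $O(\epsilon)$ coefficients while $h_0,h_1$ are $O(1)$, the linear coupling $h_{-1}$ produces the $O(\epsilon)$ Kuramoto--Sakaguchi term $\xi_1^0\cos(\phi_k-\phi_j+\chi_1^0)$; the cubic couplings each acquire an extra factor $r_*^2=O(\lambda)$ and sort by phase content into first-harmonic pairwise terms ($h_2,h_3,h_8,h_{10}$) giving the $\lambda\xi_1^1$ part of $g_2$, the second-harmonic pairwise term $h_6$ giving $\lambda\xi_2^1$, the three-phase combinations $h_7\to g_3$ and $h_9\to g_4$, and the four-phase combination $h_{11}\to g_5$. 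The terms $h_4,h_5$ contribute nothing to the phases at equal amplitude, and all index-independent symmetric pieces are collected into $\tilde{\Omega}(\phi,\epsilon)$. The coefficients $\xi_i^j,\chi_i^j$ are read off from the real and imaginary parts of the $\lambda$-Taylor coefficients of the $a_i$ together with the amplitude scaling, and are generically nonzero. This already yields the assertion that at most four phases interact, for every $N$.

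\emph{Stage (iii): remainder and timescale.} The $S^1$ symmetry forces normal-form monomials to have odd total degree, so the first neglected terms have degree $5$ and contribute to the phase equation at order $r_*^4=O(\lambda^2)$; these, together with the flat (hence subdominant) non-normal-form remainder, make up $\epsilon\tilde{g}$ with $\tilde{g}=O(\lambda^2)$ uniformly in the phases, the uniformity coming from boundedness of the trigonometric terms. The amplitude corrections $r_j-r_*=O(\epsilon\sqrt{\lambda})$ feed back only at order $\epsilon^2$. Finally a Gronwall comparison between the full phase system and its truncation shows that, with dominant neglected term of size $\epsilon\tilde{g}=O(\epsilon\lambda^2)$, the phase error grows at rate $O(\epsilon\lambda^2)$ and stays $o(1)$ until $t\sim\tilde{t}=O(\epsilon^{-1}\lambda^{-2})$ (for fixed $\lambda$ and $\epsilon\to 0$ the $O(\epsilon^2)$ term is subdominant). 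I expect the hardest parts to be the quantitative normally hyperbolic persistence with $\lambda$-dependent hyperbolicity constants --- tracking how $\epsilon_0(\lambda)$ degenerates and verifying the torus is a smooth graph over the phases --- and establishing the uniform-in-phase bound $\tilde{g}=O(\lambda^2)$, which requires controlling the amplitude-slaving map and all higher-degree normal-form contributions; the Gronwall timescale estimate is then routine.
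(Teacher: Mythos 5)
Your Stage (i) follows the same route as the paper (product of hyperbolic limit cycles with normal contraction rate $\approx -2\lambda$, then Fenichel persistence with $\epsilon_0$ depending on $\lambda$), and your sorting of the cubic equivariants into first/second-harmonic pairwise terms ($h_2,h_3,h_8,h_{10}$ and $h_6$), three-phase terms ($h_7\to g_3$, $h_9\to g_4$) and the four-phase term ($h_{11}\to g_5$) agrees with the paper's bookkeeping. The genuine gap is in Stage (ii): you propose to evaluate $\FD{}{t}\phi_j=r_j^{-1}\Im(\dot z_j e^{-i\phi_j})$ \emph{at equal amplitudes} $r_j=r_*$, justified by the claim that the amplitude corrections ``feed back only at order $\epsilon^2$''. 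That claim is false, and with it both the identification of the coefficients and the error bound $\tilde g=O(\lambda^2)$ collapse. Already the linear equivariant $a_{-1}h_{-1}$ has a radial (real) part of size $O(\epsilon R_*)=O(\epsilon\sqrt{\lambda})$, while the radial contraction rate is only $O(\lambda)$; hence the slaved, phase-dependent amplitude deviation is $\rho_j=r_j-R_*=O(\epsilon\lambda^{-1/2})$, not $O(\epsilon\sqrt{\lambda})$ as you state. This deviation enters the phase equation through the nonisochronicity (shear) term $V_I'(R_*)\rho_j$, and since $V_I'(R_*)=2a_{1I}R_*+\cdots=O(\sqrt{\lambda})$, it produces a phase drift of size $O(\epsilon)$ --- the \emph{same} order as the direct coupling you keep, and with the same resonant phase combinations, so it does not average away. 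This is exactly why the paper, after rescaling ($\rho_k=\epsilon R_*\lambda^{-1}r_k$, $T=\lambda t$), makes the further substitution $\sigma_j=r_j+f_j/A(\lambda)$ and obtains the effective coupling $H_j=h_j-\frac{C(\lambda)}{A(\lambda)}f_j$: each sine term $\alpha_k\sin(\theta_k+\theta)$ from $\Im F_1$ must be combined with the cosine term $-\frac{C(0)}{A(0)}\alpha_k\cos(\theta_k+\theta)$ fed in from $\Re F_1$ through the amplitude response, giving the paper's $\beta_k\cos(\gamma_k+\theta)$.

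Because $f_j$ and $h_j$ involve identical phase combinations, your structural conclusions (at most four interacting phases, the harmonic content of $g_2,\ldots,g_5$) happen to survive, but every pair $(\xi_i^j,\chi_i^j)$ ``read off from the $a_i$'' alone is wrong: the Kuramoto--Sakaguchi lag $\chi_1^0$ is determined jointly by $\theta_{-1}$ and the shear ratio $C(0)/A(0)=a_{1I}/a_{1R}$, not by $\theta_{-1}$ alone, and the $O(\lambda)$ part of $g_2$ proportional to $\alpha_{-1}$ --- which the paper shows involves $A'(0)$ and $C'(0)$, i.e.\ fifth-order coefficients of the uncoupled normal form --- is invisible to your scheme. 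Worse, since the terms you discard are $O(\epsilon)$ rather than $O(\epsilon\lambda^2)$, the Gronwall comparison in your Stage (iii) only yields closeness on times $O(\epsilon^{-1})$, far short of the claimed $\tilde t=O(\epsilon^{-1}\lambda^{-2})$. A secondary slip: $h_4$ and $h_5$ do contribute to the phase dynamics at equal amplitudes --- a constant frequency shift and the $j$-independent but phase-dependent term $\alpha_5 N^{-2}\sum_{i,k}\sin(\theta_5+\phi_i-\phi_k)$ respectively, which are precisely the $\beta_4,\beta_5$ terms of $\tilde\Omega(\phi,\epsilon)$ in the paper. To repair the proof you must slave the amplitudes to the phases to $O(\epsilon)$ accuracy and carry the shear feedback into the phase equation, as the paper does via $\sigma_j$ and $H_j$.
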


The proof of this Theorem is given in the next section. We remark that if we set $\xi_k^1=0$ in the theorem above, this gives the Kuramoto-Sakaguchi system as a truncation but with errors $O(\epsilon\lambda)$, meaning the timescale of validity of the Kuramoto-Sakaguchi system approximation will typically only be $O(\epsilon^{-1}\lambda^{-1})$. We discuss the implications on timescales of validity of the approximation more precisely in the following corollary which is obtained by integrating the $O(\epsilon \lambda^2)$ error term in the truncation.

\begin{corollary}
\label{cor:main}
Consider the system and hypotheses as in Theorem~\ref{thm:main}. Then for any $0<a<1$ and any $0<\epsilon\ll\lambda\ll 1$ such that there is an attracting $N$-torus, there is a timescale $t_{\max}=O(\epsilon^{a-1}\lambda^{-2})$ such that for any solution $\phi(t)$ of (\ref{principalEQ2}), there is a solution $\tilde{\phi}(t)$ of the truncated equation with 
$$
|\phi(t)-\tilde{\phi}(t)|<\epsilon^{a}
$$
over $0<t<t_{\max}$. If we truncate further to only Kuramoto-Sakaguchi terms by setting $\xi^1_k=0$, then this will be possible only over a shorter timescale $t_{\max}=O(\epsilon^{a-1}\lambda^{-1})$.
\end{corollary}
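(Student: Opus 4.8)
The statement to prove is the Corollary, which converts the uniform error estimate of Theorem~\ref{thm:main} into a sharper statement about how long solutions of the truncated system stay close to solutions of the full system. I have Theorem~\ref{thm:main} available, which tells me that the full equation \eqref{principalEQ2} differs from its truncation (obtained by dropping $\tilde g$ and the $O(\epsilon^2)$ terms) by a vector field of size $O(\epsilon\lambda^2)$ uniformly in the phases.

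\begin{proof}
The plan is a standard Gr\"onwall comparison between the full flow and the truncated flow, with the book-keeping arranged to extract the stated timescale. Write the full system \eqref{principalEQ2} as $\dot\phi = F(\phi) + R(\phi)$, where $F$ collects $\tilde\Omega$ together with the $g_2,\ldots,g_5$ coupling terms and $R(\phi)=\epsilon\tilde g(\phi) + O(\epsilon^2)$ is the discarded remainder; by Theorem~\ref{thm:main} we have the uniform bound $\|R\|\le C\epsilon\lambda^2$ on the torus for some constant $C$. Let $\phi(t)$ solve the full system and $\tilde\phi(t)$ solve $\dot{\tilde\phi}=F(\tilde\phi)$ with $\tilde\phi(0)=\phi(0)$. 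First I would estimate the Lipschitz constant $L$ of $F$ on the torus. The key point is that $F$ carries an overall prefactor $\epsilon$ (the $\tilde\Omega$ term is common to both equations and cancels in the difference, or is itself $O(\epsilon)$ in its $\phi$-dependence), so its derivative with respect to the phases is $O(\epsilon)$; thus $L = O(\epsilon)$. Setting $u(t)=|\phi(t)-\tilde\phi(t)|$, subtracting the two equations and applying the triangle inequality gives $\dot u \le L u + \|R\|$, hence by Gr\"onwall
\begin{equation}
u(t) \le \frac{\|R\|}{L}\left(e^{Lt}-1\right) \le C'\lambda^2\left(e^{Lt}-1\right),
\end{equation}
where $C'=C/(\text{const})$ absorbs the cancellation of the $\epsilon$ factors in the ratio $\|R\|/L$.

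Next I would solve for the time at which the bound $u(t)<\epsilon^a$ is first threatened. Since $L=O(\epsilon)$ and we work in the regime $0<\epsilon\ll\lambda\ll1$, for $t$ up to the claimed $t_{\max}=O(\epsilon^{a-1}\lambda^{-2})$ the product $Lt = O(\epsilon\cdot\epsilon^{a-1}\lambda^{-2})=O(\epsilon^a\lambda^{-2})$ is small, so $e^{Lt}-1\le 2Lt$. Substituting gives $u(t)\le 2C'\lambda^2 L t = O(\epsilon\lambda^2)\,t$, and requiring this to be below $\epsilon^a$ yields exactly $t = O(\epsilon^{a-1}\lambda^{-2})$, which is the stated timescale. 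The second assertion follows by the same computation with $R$ replaced by the larger remainder that arises when the $\xi_k^1$ terms are also truncated: by the remark preceding the corollary, dropping those terms produces an error of size $O(\epsilon\lambda)$ rather than $O(\epsilon\lambda^2)$, so the ratio $\|R\|/L$ is $O(\lambda)$ instead of $O(\lambda^2)$, and the threshold condition $O(\epsilon\lambda)\,t<\epsilon^a$ gives $t_{\max}=O(\epsilon^{a-1}\lambda^{-1})$.

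The main obstacle, and the step requiring care, is justifying that the relevant Lipschitz/derivative constant of $F$ is genuinely $O(\epsilon)$ uniformly on the invariant torus, rather than merely $O(1)$; this is what makes the ratio $\|R\|/L$ independent of $\epsilon$ and so produces the $\epsilon^{a-1}$ gain in the timescale. This relies on every $\phi$-dependent coupling term in $F$ carrying the explicit $\epsilon$ prefactor visible in \eqref{principalEQ2}, together with the fact that the functions $g_2,\ldots,g_5$ and their derivatives are bounded (they are finite trigonometric polynomials) uniformly in $N$ owing to the normalization by powers of $N$. A secondary technical point is confirming that the parametrization of the attracting torus furnished by Theorem~\ref{thm:main} is $C^r$ with the $O(\epsilon\lambda^2)$ remainder bound holding uniformly, so that the Gr\"onwall argument may be run entirely in the phase coordinates; this is exactly the content already established in Theorem~\ref{thm:main}, so no new analytic input is needed beyond the elementary comparison estimate.
\end{proof}
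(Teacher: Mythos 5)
Your overall strategy is the same as the paper's: the paper justifies Corollary~\ref{cor:main} in one line, namely ``integrating the $O(\epsilon\lambda^2)$ error term in the truncation'' (and the $O(\epsilon\lambda)$ error for the Kuramoto--Sakaguchi truncation), and your Gr\"onwall comparison, together with the observation that the Lipschitz constant of the truncated vector field is $L=O(\epsilon)$ because every phase-dependent term in \eqref{principalEQ2} carries an $\epsilon$ prefactor, is the natural rigorous formalization of exactly that argument.

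However, one step in your write-up fails on part of the claimed parameter range. The linearization $e^{Lt}-1\le 2Lt$ requires $Lt_{\max}=O(\epsilon^{a}\lambda^{-2})$ to be bounded, and this does \emph{not} follow from $0<\epsilon\ll\lambda\ll 1$: it needs $\epsilon^{a}\lesssim \lambda^{2}$, i.e.\ $\epsilon\lesssim\lambda^{2/a}$ with $2/a>2$, which is strictly stronger than $\epsilon\ll\lambda$. For instance along the scaling $\epsilon=\lambda^{2}$ --- which the paper itself offers as an admissible instance of the regime $\epsilon=o(\lambda)$ in the proof of Theorem~\ref{thm:main} --- one gets $Lt_{\max}=O(\lambda^{2(a-1)})\to\infty$ for every $a\in(0,1)$, so your bound degenerates to $u(t_{\max})\lesssim \lambda^{2}\exp(\lambda^{2(a-1)})$, which is useless; in that part of the regime the Gr\"onwall estimate only guarantees closeness up to times of order $\epsilon^{-1}\log\left(1+\epsilon^{a}\lambda^{-2}\right)$, much shorter than $\epsilon^{a-1}\lambda^{-2}$. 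To be fair, the paper's one-line proof neglects exponential amplification altogether and so silently carries the same restriction; but your proof asserts the smallness of $Lt$ as a consequence of $\epsilon\ll\lambda$, and that assertion is false as stated. You should either add the hypothesis $\epsilon\ll\lambda^{2/a}$ (equivalently, fix $\lambda$ and take $\epsilon$ sufficiently small depending on $\lambda$ and $a$, consistent with the theorem's ``fixed $\lambda$, $\epsilon\to 0$'' framing), or supply a structural reason why the worst-case amplification does not occur. A second, smaller point: the corollary's last sentence says the Kuramoto--Sakaguchi approximation is valid \emph{only} over $t_{\max}=O(\epsilon^{a-1}\lambda^{-1})$; your argument (like the paper's preceding remark) proves only the sufficiency direction, not the failure beyond that time, which would require using that the dropped $\lambda\xi^{1}_{k}$ terms are generically nonzero and hence produce a genuine $O(\epsilon\lambda t)$ drift.
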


\section{Proof of Theorem~\ref{thm:main}}
\label{sec:mainproof}

We  write the equation for $\FD{}{t} {z}_1$ from \eqref{eq:CNsystem} in Poincar\'{e}-Birkhoff normal form \cite{GSS88} as the $S_N\times S^1$-equivariant system
\begin{equation}
\label{eq:inv5}
\FD{}{t} {z}_1=  U(z_1) + \epsilon F_1(z_1,\ldots,z_N,\epsilon),
\end{equation}
and the equations for the other $\FD{}{t} {z}_j$ are obtained by permutation of the indices; there is an error term but this is beyond all (polynomial) orders.

Since we are assuming there is a Hopf bifurcation of \eqref{eq:CNsystem} for $\epsilon=0$ (the uncoupled system) on varying $\lambda$ through $0$, it follows that 
\begin{equation}
\label{eqhopf}
\FD{}{t} {z_1}= U(z_1):=V(z_1)z_1:=\left[\lambda + i \omega + a_1 |z_1|^{2} + \tau(z_1)\right]z_1,
\end{equation}
and we write $V(z_1)=V_R(z_1)+iV_I(z_1)$. The tail  $\tau(z_1)z_1$ represents the higher order terms in the normal form for the uncoupled system: we can assume $\tau(0)=\tau'(0)=\tau''(0)=0$ and write $\tau(z_1)=\tau_R(z_1)+i\tau_I(z_1)$. The hypothesis that the Hopf bifurcation is generic and supercritical implies
$$
a_{1R}<0.
$$
We seek solutions of (\ref{eqhopf}) of the form
\begin{equation}
z_1(t)=R_1(t)e^{i\phi_1(t)}=R_1(t)e^{i[\Omega t+\psi_1(t)]}
\label{eq:z1po}
\end{equation}
for some $R_1(t)$, $\psi_1(t)$ and constant $\Omega$. Substituting this into (\ref{eqhopf}), we require
$$
\FD{}{t} R_1 + iR_1 \left[\Omega+\FD{}{t} \psi_1\right] = R_1 V_R(R_1)+ i R_1 V_I(R_1).
$$
From (\ref{eqhopf}), note that 
$$
V_R(R_1)=\lambda + a_{1R} R_1^2+ \tau_R(R_1^2),~~V_I(R_1) =\omega+a_{1I} R_1^2+\tau_I(R_1).
$$ 
From this, it is clear that for small enough $\lambda>0$ and $\epsilon=0$ there is a stable periodic orbit at fixed $R_1=R_*>0$ such that $V_R(R_*)=0$, with angular frequency $\Omega=V_I(R_*)$ and arbitrary but fixed phase $\psi_1$.

More precisely, solving $V_R(R_*)=0$, we note (recalling $a_{iR}<0$) that
\begin{equation}
\begin{split}
R_*^2=& \frac{\lambda}{-a_{1R}}+O(\lambda^{2}),\\
\Omega =& V_I(R_*^2)= \omega+ a_{1I} R_*^2+\tau(R_*)= \omega + \frac{a_{1I}}{-a_{1R}} \lambda+O(\lambda^2).
\label{eqRstar}
\end{split}
\end{equation}
In particular there is a $\lambda_0>0$ such that for any $0<\lambda<\lambda_0$ there is a stable periodic orbit (\ref{eq:z1po}) satisfying (\ref{eqRstar}).

Now consider the dynamics of the full (but still uncoupled) system. For $\epsilon=0$ and any $0<\lambda<\lambda_0$  there is a stable invariant torus given by
\begin{equation}
(z_1,\ldots,z_N) = (R_*e^{i(\Omega t+\psi_1)},\ldots,R_*e^{i(\Omega t+\psi_N)}),
\label{eqTorus}
\end{equation}
parametrized by the phases $(\psi_1,\ldots\psi_N)\in\T^N$. This invariant torus is foliated by neutrally stable periodic orbits with period $2\pi/\Omega$ and so for each $0<\lambda<\lambda_0$, the torus is normally hyperbolic. By Fenichel's theorem \cite{Fenichel1979} there is an $\epsilon_0$ (depending on $\lambda$) such that for $0<\epsilon<\epsilon_0$ the invariant torus persists and is $C^r$-smooth for arbitrarily large $r$. Note that reducing $r$ will restrict the $\epsilon_0$: we will need $r\geq 5$ for the approximation to be valid.

We now aim to find the approximating system (\ref{principalEQ2}) on this invariant torus for $0<\epsilon<\epsilon_0$.  We follow a method similar to \cite[Section 7.3]{Wiggins}, using coordinate changes and a slow time to ``blow up'' the weak hyperbolic dynamics. Including all terms up to cubic order (except for the linear term $a_1z_1$ which can be assumed to be contained in $U(z_1)$ by a suitable change in parameters), using Theorem~\ref{thm:expand} we have
\begin{equation}\label{eq:inv5F1}
\begin{split}
F_1 = &  \left[a_{-1}\frac{1}{N} \sum_{j} z_j+
a_2 \frac{z_{1}^{2}}{N} \sum_{j}{{\overline z}_{j}} +  a_3\frac{|z_{1}|^{2}}{N}  \sum_{j}{z_{j}} \right.  \\
& +a_4 \frac{z_{1}}{N} \sum_{j}{|z_{j}|^{2}} + a_5 \frac{z_{1}}{N^2}\sum_{j,k} z_{j}{\overline z}_{k} +  a_6 \frac{{\overline z}_{1}}{N} \sum_{j}  z_{j}^{2} \\
& +a_7 \frac{{\overline z}_{1}}{N^2}\sum_{j,k} z_{j} z_{k}+ a_8 \frac{1}{N} \sum_{j}{|z_{j}|^{2}z_{j}}+ a_9 \frac{1}{N^2}\sum_{j,k} z_{j}^{2}{\overline z}_{k}  \\
& +\left. a_{10} \frac{1}{N^2}\sum_{j,k} z_{j}|z_{k}|^{2}+
a_{11} \frac{1}{N^3} \sum_{j,k,\ell} z_{j} z_{k} {\overline z}_{\ell} \right] + \tilde{F}_1+O(\epsilon).
\end{split}
\end{equation}
where the $\epsilon=0$ error term is $\tilde{F}_1=O(|z|^5)$. The complex normal form coefficients $a_k$ can be expressed using real quantities $\alpha_k$ and $\theta_k$ (or $a_{kR}$ and $a_{kI}$) such that
$$
a_k=\alpha_k e^{i \theta_k}=a_{kR}+i a_{kI},
$$
for $i=-1,1,\ldots,11$. We seek solutions of the following form:
$$
z_k(t)= R_k(t) e^{i(\Omega t + \psi_k (t))}= [R_*+ \rho_k(t)] e^{i(\Omega t + \psi_k (t))} 
$$
that remain close to periodic orbits on the invariant torus (\ref{eqTorus}). In particular, we seek solutions such that $\rho_k$ is small and $\psi_k$ varies slowly with $t$. Re-writing \eqref{eq:inv5}, we have
\begin{equation}
\label{eq:PrincNewCoord1}
\FD{}{t}  {\rho}_1 + i R_1 \left[ \Omega+\FD{}{t} {\psi}_1\right]
= U(R_1) + \epsilon F_1(z_1,\cdots,z_N,0) e^{-i(\Omega t+\psi_1)}+O(\epsilon^2).
\end{equation}
Writing $U$ in real and imaginary parts and expanding for small $\rho_1$ we have
\begin{eqnarray*}
U(R_1) &=& U_R(R_1)+ i R_1 V_I(R_1)\\
&=& U_R(R_*+ \rho_1)+ i R_1 V_I(R_* + \rho_1)\\
&=& U'_R(R_*)\rho_1+ i R_1 [V_I(R_*)+ V_I'(R_*)\rho_1]+O(\rho_1^2).
\end{eqnarray*}
If we define
\begin{equation}
A(\lambda):=\frac{U'_R(R_*)}{\lambda},~~B(\lambda):=\frac{V_I'(R_*)}{\lambda^{1/2}},
\end{equation}
then, from (\ref{eqRstar}),
\begin{equation}
U(R_1) = \lambda A(\lambda)\rho_1 +i R_1[\Omega+\lambda^{1/2} B(\lambda)\rho_1]+O(\rho_1^2).
\end{equation}
This implies that (\ref{eq:PrincNewCoord1}) can be expressed as
\begin{eqnarray}
\FD{}{t}  {\rho}_1 + i R_1 \left[ \Omega+\FD{}{t} {\psi}_1\right] & =&\lambda A(\lambda)\rho_1 +i R_1[\Omega+\lambda^{1/2} B(\lambda)\rho_1]\nonumber\\
& & + \epsilon F_1(z_1,\ldots,z_N)e^{-i(\Omega t+\psi_1)} +O(\epsilon^2)\label{eq:PNC2}
\end{eqnarray}
Recalling from (\ref{eqRstar}) that $R_*^2=\lambda/(-a_{1R})+O(\lambda^2)$, $U(R_*)=U_R(R_*)+iV_I(R_*)R_*=(\lambda +a_{1R} R_*^2+\tau(R_*))R_*$, $\tau(z)=O(z^4)$, and $\tau'(z)=O(z^3)$ so we have
\begin{eqnarray}
A(\lambda) &=& \frac{U_R'(R_*)}{\lambda}\nonumber\\
&=& \frac{\lambda+3 a_{1R}R_*^2 +\tau_R'(R_*)R_*+\tau_R(R_*)}{\lambda}\nonumber\\
&=& 1 + \frac{3a_{1R}}{-a_{1R}}+O(\lambda)\nonumber\\
&=& -2+O(\lambda)\label{eq:Alambda}
\end{eqnarray} 
Similarly, we have
\begin{eqnarray}
B(\lambda)&=& \frac{V'_I(R_*)}{\sqrt{\lambda}} = \frac{2 R_* a_{1I}+\tau_I'(R_*)}{\sqrt{\lambda}} \nonumber\\
&=& \frac{2a_{1I}\sqrt{\lambda}}{\sqrt{\lambda a_{1R}}}(1+O(\lambda))\nonumber\\
&=& \frac{2a_{1I}}{\sqrt{-a_{1R}}}+O(\lambda).\label{eq:Blambda}
\end{eqnarray} 
In particular, for $\lambda\rightarrow 0$ there are finite limits
\begin{equation}
A(0)=-2,~~B(0)= \frac{2a_{1I}}{\sqrt{-a_{1R}}}.
\end{equation}
Note that
$$
\begin{array}{rcl}
a_{-1} h_{-1}(z) e^{-i(\Omega t + \psi_1)} & = & \left[a_{-1} \frac{1}{N} \sum_{j} z_j \right] e^{-i(\Omega t + \psi_1)}\\ 
& = & \alpha_{-1} e^{i\theta_{-1}} \frac{1}{N}\sum_j R_j e^{i(\Omega t + \psi_j)}e^{-i(\Omega t + \psi_1)}\\
& = & \alpha_{-1} \frac{1}{N}\sum_j R_j e^{i(\theta_{-1} + \psi_j- \psi_1)}\\
& = & \alpha_{-1} \frac{1}{N}\sum_j R_j [\cos(\theta_{-1} + \psi_j- \psi_1)+\sin(\theta_{-1} + \psi_j- \psi_1)].
\end{array}
$$
Applying similar expansions for the remaining terms in $F_1$ and taking real parts of (\ref{eq:PNC2}) gives 
\begin{equation}\label{eqR1}
\begin{array}{ll}
\FD{}{t} {\rho}_1(t) = & \lambda A(\lambda) \rho_1 + \epsilon \left[ \alpha_{-1} \sum'_{j} R_j \cos (\theta_{-1}+\psi_j-\psi_1)\right.\\
& +\alpha_2 \sum'_{j} R_1^2 R_j \cos (\theta_2+ \psi_1-\psi_j) \\
& +\alpha_3 \sum'_{j} R_1^2 R_j \cos  (\theta_3+ \psi_j-\psi_1) \\
& +\alpha_4 \sum'_{j} R_1 R_j^2 \cos \theta_4  \\
& +\alpha_5 \sum'_{j,k} R_1 R_j R_k  \cos (\theta_5+ \psi_j-\psi_k)  \\
& +\alpha_6 \sum'_{j} R_1 R_j^2  \cos (\theta_6+ 2 \psi_j- 2\psi_1) \\
& +\alpha_7 \sum'_{i,j} R_1 R_i R_j  \cos [\theta_7+ (\psi_i- \psi_1)+(\psi_j-  \psi_1)] \\
& +\alpha_8 \sum'_{j} R_j^3 \cos (\theta_8+ \psi_j - \psi_1) +  \\
& +\alpha_9 \sum'_{j,k} R_j^2 R_k  \cos (\theta_9+ 2 \psi_j-\psi_k - \psi_1) \\
& +\alpha_{10} \sum'_{j,k} R_j R_k^2  \cos (\theta_{10}+\psi_j - \psi_1)    \\
& +\left.\alpha_{11} \sum'_{i,j,k} R_i R_j R_k  \cos (\theta_{11}+\psi_i+\psi_j-\psi_k- \psi_1) \right]\\
&+O(\rho^2,\epsilon^2)
\end{array}
\end{equation}
where $\rho^2=\max_j(\rho_j^2)$ and $\sum'_{j} a_j:=\frac{1}{N}\sum_{j=1}^N a_j$, $\sum'_{j,k}a_{j,k}:=\frac{1}{N^2} \sum_{j,k=1}^{N}a_{j,k}$, etc are the normalized sums. The equivalent equation for $\psi_1$ is obtained by taking imaginary parts of (\ref{eq:PrincNewCoord1}):
\begin{equation}\label{eqp1}
\begin{array}{ll}
R_1[\Omega+\FD{}{t} {\psi}_1(t)] =  & R_1\Omega+R_1\lambda^{1/2} B(\lambda) \rho_1 + \epsilon \big[ \alpha_{-1} \sum'_{j} R_j \sin(\theta_{-1}+\psi_1-\psi_j)\\
& +\alpha_2 \sum'_{j} R_1^2 R_j \sin (\theta_2+ \psi_1-\psi_j)   \\
& +\alpha_3 \sum'_{j} R_1^2 R_j \sin  (\theta_3+ \psi_j-\psi_1)\\
& +\alpha_4 \sum'_{j}  R_1 R_j^2 \sin \theta_4  \\
& +\alpha_5 \sum'_{j,k}  R_1 R_j R_k  \sin (\theta_5+ \psi_j-\psi_k)   \\
& +\alpha_6 \sum'_{j}  R_1 R_j^2  \sin (\theta_6+ 2 (\psi_j- \psi_1))  \\
&	+\alpha_7 \sum'_{i,j}  R_1 R_i R_j  \sin [\theta_7+ (\psi_i- \psi_1)+(\psi_j-  \psi_1)]  \\
& +\alpha_8 \sum'_{j} R_j^3 \sin (\theta_8+ \psi_j - \psi_1)  \\
& +\alpha_9 \sum'_{j,k} R_j^2 R_k  \sin (\theta_9+ 2 \psi_j-\psi_k - \psi_1)  \\
& +\alpha_{10} \sum'_{j,k} R_j R_k^2  \sin (\theta_{10}+\psi_j - \psi_1)     \\
& +\alpha_{11} \sum'_{i,j,k} R_i R_j R_k  \sin (\theta_{11}+\psi_i+\psi_j-\psi_k- \psi_1)  \big] \\
& + O(\rho^2,\epsilon^2)
\end{array}
\end{equation}
which, after cancellation and dividing through by $R_1$, gives
\begin{equation}\label{eqp1}
\begin{array}{ll}
\FD{}{t} {\psi}_1(t) =  & \lambda^{1/2}B(\lambda) \rho_1 + \epsilon \big[ \alpha_{-1} \sum'_{j} (R_j/R_1) \sin(\theta_{-1}+\psi_1-\psi_j)\\
& +\alpha_2 \sum'_{j} R_1 R_j \sin (\theta_2+ \psi_1-\psi_j)   \\
& +\alpha_3 \sum'_{j} R_1 R_j \sin  (\theta_3+ \psi_j-\psi_1)\\
& +\alpha_4 \sum'_{j}  R_j^2 \sin \theta_4  \\
& +\alpha_5 \sum'_{j,k}  R_j R_k  \sin (\theta_5+ \psi_j-\psi_k)   \\
& +\alpha_6 \sum'_{j}  R_j^2  \sin (\theta_6+ 2 (\psi_j- \psi_1))  \\
&	+\alpha_7 \sum'_{i,j}  R_i R_j  \sin [\theta_7+ (\psi_i- \psi_1)+(\psi_j-  \psi_1)]  \\
& +\alpha_8 \sum'_{j} (R_j^3/R_1) \sin (\theta_8+ \psi_j - \psi_1)  \\
& +\alpha_9 \sum'_{j,k} (R_j^2 R_k/R_1)  \sin (\theta_9+ 2 \psi_j-\psi_k - \psi_1)  \\
& +\alpha_{10} \sum'_{j,k} (R_j R_k^2/R_1)  \sin (\theta_{10}+\psi_j - \psi_1)     \\
& +\alpha_{11} \sum'_{i,j,k} (R_i R_j R_k/R_1)  \sin (\theta_{11}+\psi_i+\psi_j-\psi_k- \psi_1)  \big] \\
& + \frac{1}{R_1} O(\rho^2,\epsilon^2)
\end{array}
\end{equation}

We now define scaled radial variables $r_k$ and a slow time $T$ by
\begin{equation}
\rho_k=\epsilon \frac{R_*(\lambda)}{\lambda} r_k,~~T=\lambda t.
\end{equation} 
For fixed $\lambda$, note that $\rho_k=O(\epsilon)$ and so (\ref{eqR1}) can be written
\begin{equation}\label{eqR2}	
 \FD{}{T} r_1(t) =  A(\lambda) r_1 + f_1 +O(\epsilon),
\end{equation}
where
$$
f_1:= f_1^0 + R^2_*(\lambda) f_1^1 + O(\lambda^2)
$$
and
\begin{equation}
\begin{array}{ll}
f_1^0:=  & \alpha_{-1} \sum'_{j} \cos (\theta_{-1}+\psi_1-\psi_j),\\
f_1^1: =& \alpha_2 \sum'_{j} \cos (\theta_2+ \psi_1-\psi_j) \\
& +\alpha_3 \sum'_{j}  \cos  (\theta_3+ \psi_j-\psi_1) \\
& +\alpha_4 \sum'_{j} \cos \theta_4  \\
& +\alpha_5 \sum'_{j,k} \cos (\theta_5+ \psi_j-\psi_k)  \\
& +\alpha_6 \sum'_{j} \cos (\theta_6+ 2(\psi_j- \psi_1)) \\
& +\alpha_7 \sum'_{i,j} \cos [\theta_7+ (\psi_i- \psi_1)+(\psi_j-  \psi_1)] \\
& +\alpha_8 \sum'_{j} \cos (\theta_8+ \psi_j - \psi_1) +  \\
& +\alpha_9 \sum'_{j,k} \cos (\theta_9+ 2 \psi_j-\psi_k - \psi_1) \\
& +\alpha_{10} \sum'_{j,k} \cos (\theta_{10}+\psi_j - \psi_1)    \\
& +\alpha_{11} \sum'_{i,j,k} \cos (\theta_{11}+\psi_i+\psi_j-\psi_k- \psi_1).
\end{array}
\end{equation}
Similarly, (\ref{eqp1}) can be written
\begin{equation}
\label{eqp2}
\FD{}{T} {\psi}_1(t) =  \epsilon\lambda^{-1} C(\lambda) r_1 + \epsilon\lambda^{-1} h_1 + O(\epsilon^2)
\end{equation}
where
\begin{eqnarray*}
C(\lambda):&=&\frac{R_*(\lambda)B(\lambda)}{\sqrt{\lambda}} = -2\frac{a_{1I}}{a_{1R}}+O(\lambda),\\
h_1:&=&h_1^0+ R^2_*(\lambda) h_1^1+O(\lambda^2)
\end{eqnarray*}
and
\begin{equation}
\begin{array}{ll}
h_1^0 :=  & \alpha_{-1} \sum'_{j} \sin(\theta_{-1}+\psi_1-\psi_j),\\
h_1^1 := & \alpha_2 \sum'_{j} \sin (\theta_2+ \psi_1-\psi_j)   \\
& +\alpha_3 \sum'_{j} \sin  (\theta_3+ \psi_j-\psi_1)\\
& +\alpha_4 \sum'_{j} \sin \theta_4  \\
& +\alpha_5 \sum'_{j,k} \sin (\theta_5+ \psi_j-\psi_k)   \\
& +\alpha_6 \sum'_{j} \sin (\theta_6+ 2 (\psi_j- \psi_1))  \\
&	+\alpha_7 \sum'_{i,j} \sin [\theta_7+ (\psi_i- \psi_1)+(\psi_j-  \psi_1)]  \\
& +\alpha_8 \sum'_{j} \sin (\theta_8+ \psi_j - \psi_1)  \\
& +\alpha_9 \sum'_{j,k} \sin (\theta_9+ 2 \psi_j-\psi_k - \psi_1)  \\
& +\alpha_{10} \sum'_{j,k} \sin (\theta_{10}+\psi_j - \psi_1)     \\
& +\alpha_{11} \sum'_{i,j,k} \sin (\theta_{11}+\psi_i+\psi_j-\psi_k- \psi_1)
\end{array}
\end{equation}

In summary, we can write system (\ref{eqR2},\ref{eqp2}) as
\begin{equation}
\begin{array}{rcl}
\frac{d}{dT} r_j & = & A(\lambda) r_j +  f_j + O(\epsilon) \\
\frac{d}{dT} \psi_j & = & \epsilon\lambda^{-1} \left[C(\lambda) r_j+  h_j\right] + O(\epsilon^2)
\end{array}
\label{eqrescaled}
\end{equation}
for $j=1,\ldots, N$. Note that $A$, $C$, $f_j$ and $h_j$ have finite limits as $\lambda\rightarrow 0$ and so (\ref{eqrescaled}) gives a slow timescale for evolution of $\psi_j$ as long as
$$
\epsilon =o(\lambda)
$$
which holds, for example, if $\epsilon= \lambda^2$.
Defining a new set of amplitude variables
$$
\sigma_j := r_j + \frac{f_j(\psi_1,\ldots,\psi_{N-1})}{A(\lambda)},
$$
system \eqref{eqrescaled} for fixed $\lambda$ becomes 
\begin{equation}
\begin{array}{rcl}
\frac{d}{dT} \sigma_j & = & A(\lambda) \sigma_j + O(\epsilon) \\
\frac{d}{dT} \psi_j & = & \epsilon \lambda^{-1} \left[ C(\lambda) \left[\sigma_j-\frac{f_j}{A(\lambda)}\right] +  h_j\right]+O(\epsilon^2).
\end{array}
\label{eqrescaled2}
\end{equation}
which gives
\begin{equation}
\begin{array}{rcl}
\frac{d}{dT} \sigma_j & = & A(\lambda) \sigma_j + O(\epsilon) \\
\frac{d}{dT} \psi_j & = & \epsilon\lambda^{-1} \left[ C(\lambda) \sigma_j+H_j\right] + O(\epsilon^2), 
\end{array}
\label{eqrescaled3}
\end{equation}
where 
\begin{equation}
H_j=h_j - \frac{C(\lambda)}{A(\lambda)} f_j.
\label{eq:Hj}
\end{equation}
Let us write
\begin{equation}
H_j= H_j^0+\lambda H_j^1+O(\lambda^2),
\end{equation} 
where 
\begin{equation}
\begin{split}
H_j^0 =& h^0_j - \frac{C(0)}{A(0)} f^0_j,\\
H_j^1 =& \frac{R_*^2(\lambda)}{\lambda} \left[ h^1_j - \frac{C(0)}{A(0)} f^1_j\right] - \frac{C'(0)A(0)-A'(0)C(0)}{A(0)^2} f^0_j,
\end{split}
\end{equation}
and these are trigonometric polynomials in $\psi_k-\phi_j$ such that $H_j^0$ only involves pairwise coupling (and on $\alpha_{-1}$ while $H_j^1$ includes coupling of up to four phases (and on $\alpha_{2},\ldots,\alpha_{11}$).

Applying Fenichel's theorem \cite[Theorem 9.1]{Fenichel1979} to  \eqref{eqrescaled3}, for all $0<\lambda<\lambda_0$ there is a $0<\epsilon_0=o(\lambda)$ such that whenever $|\epsilon|<\epsilon_0$ the evolution of the phases is given by
$$
\FD{}{T} \psi_j = \epsilon\lambda^{-1} H_j+ O(\epsilon^2) = \epsilon\lambda^{-1} [H_j^0+\lambda H_j^1]+O(\epsilon\lambda).
$$
The solutions of this reduced equation are approximated by solutions of
$$
\FD{}{T} \psi_j = \epsilon\lambda^{-1} H_j = \epsilon\lambda^{-1} [H_j^0+\lambda H_j^1]
$$
over an interval of time $0<T<\tilde{T}$ with $\tilde{T} = O(\epsilon^{-1}\lambda^{-1})$.  In terms of the original phases $\phi_j$ and time $t$, this reduced equation is
\begin{equation}
\FD{}{t} \phi_j = \Omega+\epsilon [H_j^0+\lambda H_j^1]
\label{eq:dphiH}
\end{equation}
where the phase differences $\psi_j-\psi_k=\phi_j-\phi_k$ for all $j$ and $k$, and the approximation will be close for times $0<t<\tilde{t}$ with $\tilde{t}=O(\epsilon^{-1}\lambda^{-2})$.

For $k=-1,1,\ldots,11$ we define $\beta_k$ and $\gamma_k$ such that for all $\theta$
$$
\beta_k \cos(\gamma_j+\theta) := \alpha_k \sin(\theta_k+\theta) -\frac{C(0)}{A(0)}\alpha_k \cos (\theta_k+\theta).
$$
Then we can write (\ref{eq:dphiH}) in the form 
\begin{equation}\label{principalEQ2trun}
\begin{split}
\FD{}{t} {\phi}_j = &  \Omega+\epsilon H_1\\
= & \tilde{\Omega}(\phi,\epsilon) + \frac{\epsilon}{N}\sum_{k=1}^N g_2(\phi_k-\phi_j)+\frac{\epsilon}{N^2}\sum_{k,\ell=1}^{N}g_3(\phi_k+\phi_{\ell}-2\phi_j) \\ 
& + \frac{\epsilon}{N^2}\sum_{k,\ell=1}^{N} g_4(2\phi_k-\phi_{\ell}-\phi_j) + \frac{\epsilon}{N^3}\sum_{k,\ell,m=1}^{N} g_5(\phi_k+\phi_{\ell}-\phi_{m}-\phi_j)
\end{split}
\end{equation}
which is equivalent to a truncation of \eqref{principalEQ2} such that the coupling is as follows:
\begin{equation}
\begin{split}
\tilde{\Omega}(\phi,\epsilon) =& \Omega + R_*^2\eps \left[\beta_4 \cos \gamma_4+  \frac{\beta_5}{N^2}\sum_{j,k}  \cos (\gamma_5+ \phi_j-\phi_k) \right] \\
g_2(\varphi) = & \beta_{-1} \cos(\gamma_{-1}+\varphi) + R_*^2  \left[ 
\beta_2 \cos (\gamma_2 - \varphi) 
+ \beta_3 \cos (\gamma_3 + \varphi)\right.\\  
& \left. + \beta_6 \cos (\gamma_6 + 2\varphi)  
+ \beta_8 \cos (\gamma_8 + \varphi)  
+ \beta_{10} \cos (\gamma_{10} + \varphi)  
\right]\\
& - \lambda \frac{C'(0)A(0)-A'(0)C(0)}{A(0)^2} \alpha_{-1} \cos(\theta_{-1}+\varphi)
\\
g_3(\varphi) =& R_*^2 \left[\beta_7 \cos (\gamma_7+ \varphi)\right]
\\
g_4(\varphi) =& R_*^2 \left[\beta_9 \cos (\gamma_9+\varphi)\right]
\\
g_5(\varphi) =& R_*^2 \left[\beta_{11} \cos (\gamma_{11}+\varphi)\right].
\end{split}
\label{eqcoups}
\end{equation}

Substituting in the leading order of the expansion $R_*^2= \lambda/(-a_{1R})+O(\lambda^2)$ and simplifying the trigonometric expressions, we obtain the leading order terms in a phase description of the bifurcating solutions, involving the terms as expressed in \eqref{eq:coupform2}. Note that the $\alpha_{-1}$ term in $g_2$ involves (via $A'(0)$ and $C'(0)$) coefficients at fifth order in the normal form for (\ref{eqhopf}).

\section{Examples and consequences}
\label{sec:examples}

\subsection{A numerical example}

We briefly give a numerical example that illustrates the reduction in Theorem~\ref{thm:main}. Consider the system of $N$ globally coupled Stuart-Landau oscillators of the form \eqref{eq:inv5} where we choose all parameters zero except for
\begin{equation}
\label{eq:parameg}
\lambda=0.1,~\omega=1,~\eps=0.5,~a_{1r}=-1,~a_{2r}=0.3
\end{equation}
Figure~\ref{fig:examplets} shows some time series for the system \eqref{eq:inv5} with these parameters along with the case $a_{2r}=-0.3$; (a) shows a stable antiphase solution while (b) shows a stable in-phase solution.

Using the results above we re-write in terms of the phase only equations. In case (a) we have $R_*=\sqrt{0.1}=0.3162$, $\alpha_2=0.1$, $\theta_2=0$, $\beta_2=0.2$, $\gamma_2=-\pi/2$; all of the $g_k$ are zero except for $g_2(\phi)= 0.3\sin (\phi)$. In case (b) we similarly get $g_2(\phi)= -0.3\sin(\phi)$. Figure~\ref{fig:examplereducedts} presents two time series for the system \eqref{principalEQ}). Observe the qualitatively (and indeed quantitatively) similar behaviour of \eqref{eq:inv5} and the reduced system \eqref{principalEQ} in this case, even though $\lambda$ and $\epsilon$ are comparatively large.

\subsection{Non-pairwise coupling and synchrony}

We discuss certain periodic cluster states, and find that the presence of the additional ``non-pairwise coupling'' terms in \eqref{principalEQ} implies a wider range of behaviour that is possible for pairwise coupling \eqref{eq:TNkuramoto}. Note that  particular isotropy subgroups for $S_N \times \T^1$ symmetry that generally contain periodic solutions \cite{AshSwi91}. Using the notation $\zeta= e^{2 \pi i / N}$, we recall from \cite{AshSwi91} that in particular the following fixed point subspaces are invariant:
$$
\begin{array}{ll}
{\rm Fix}(S_N)=&\{ (z,\ldots,z ): z \in \C \},\\
{\rm Fix}(\Z_N)=&\{ (z,z \zeta,z \zeta^2,\ldots, z \zeta^{-1}): z \in \C \}
\end{array}
$$
and indeed for any factorization $N=km$ the following fixed point subspace is also invariant:
$$
{\rm Fix}((S_k)^m \otimes \Z_m)=\{ (z,\ldots,z,z \zeta^k,\ldots,z \zeta^{-k} ): z \in \C \}
$$
The above fixed point spaces all contain a periodic orbit after the Hopf bifurcation ($\lambda>0$), for sufficiently small $\epsilon$, even on inclusion of non-pairwise coupling terms. This can be verified by considering the reduced equations \eqref{principalEQ} and noting that within each of these invariant spaces we can reduce to a phase equation $\dot{\Phi}$ is a constant that depends on the coupling: for example, for full synchrony we have
$$
{\rm Fix}(S_N)=\{ (\Phi,\ldots,\Phi ) \}
$$
and so, using (\ref{principalEQ2trun},\ref{eqcoups}) we can determine that the frequency of the fully synchronous state is
$$
\dot{\Phi} = \Omega+ \epsilon \beta_{-1}\cos \gamma_{-1}+ \epsilon R_*^2 \sum_{j=2}^{11} \beta_j\cos(\gamma_j)-\epsilon \lambda \frac{C'(0)A(0)-A'(0)C(0)}{A(0)^2}\alpha_{-1}\cos(\theta_{-1}).
$$
In principle also the stability of the synchronous state can be determined, as can the frequencies and stabilities of the other periodic orbits.

\subsection{Non-pairwise coupling and two-cluster states}
\label{sec:twocluster}

We study two cluster states in more detail: these have isotropy subgroup $S_Q \times S_P$ with $Q+P=N$. In this case we have
$$
{\rm Fix}(S_Q \times S_P)=\{ (\underbrace{\phi_1,\ldots,\phi_1}_Q,\underbrace{\phi_2,\ldots,\phi_2}_P): \phi_1,\phi_2 \in \R  \}.
$$
For simplicity we assume here that $\tau(z):=0$ in the uncoupled system (\ref{eqhopf}) so that $A'(0)=C'(0)=0$. Restricting \eqref{principalEQ2trun} to the two-cluster subspace and suppressing the $O(\epsilon^2)$ terms we get
\begin{equation*}
\begin{split}
\dot \phi_1 &= \Omega + \epsilon H_1(\phi_1,\phi_2,Q,P) \\
\dot \phi_2 &= \Omega + \epsilon H_2(\phi_1,\phi_2,Q,P)
\end{split}
\end{equation*}
where $H_1$ and $H_2$ are given in Appendix~\ref{app:twocluster}. If we write the phase difference between the clusters as $\Psi := \phi_1-\phi_2$ then
\begin{equation}
\dot \Psi = \epsilon[H_1(\phi_1,\phi_2,Q,P)-H_2 (\phi_1,\phi_2,Q,P)].
\label{eqPsidot}
\end{equation}
This ODE can be written
\begin{equation}
\begin{split}
\dot \Psi = \epsilon G(\Psi):&
= 2 \epsilon \sin \frac{\Psi}{2} \left [ A_1 \cos \frac{\Psi}{2} + B_1 \sin \frac{\Psi}{2} + A_2 \cos \frac{3\Psi}{2} + B_2 \sin \frac{3\Psi}{2}\right].
\end{split}
\label{eq:Psidash}
\end{equation}
The coefficients $A_i,B_i$, $i=1,2$ can be expressed in terms of $\alpha=(Q-P)/N$ with $\alpha\in(-1,1)$ and the $\beta_j$, $\gamma_j$ for $j=2,3,6,\ldots,11$ according to the following:
\begin{equation}
\begin{split}
A_1 & = R_*^2\left(\frac{3+\alpha^2}{4} \beta_{11} \sin\gamma_{11} + \frac{3-\alpha^2}{4} \beta_{7} \sin\gamma_{7} +\frac{1+\alpha^2}{2} \beta_{9}  \sin\gamma_{9}\right.\\
& \left. -\beta_2\sin \gamma_2+\sum_{j=3,6,8,10} \beta_{j}  \sin\gamma_{j}\right) +\beta_{-1}\sin \gamma_{-1}\\
B_1 & =  R_*^2\left(\frac{\alpha+3\alpha^3}{4} \beta_{11} \cos\gamma_{11}+ \alpha\sum_{j=2,3,6,7,8,9,10} \beta_{j} \cos\gamma_{j}\right)+\alpha \beta_{-1}\cos \gamma_{-1}\\
A_2 & =  R_*^2\left(\frac{1-\alpha^2}{4} \beta_{11} \sin\gamma_{11}+ \frac{1+\alpha^2}{2} \beta_{7} \sin\gamma_{7} + \beta_{6} \sin\gamma_{6}\right)\\
B_2 & =  R_*^2\left(\frac{\alpha-\alpha^3}{4} \beta_{11} \cos\gamma_{11}+ \alpha (\beta_{6} \cos\gamma_{6}+ \beta_{7} \cos\gamma_{7})\right)
\end{split}
\label{eq:AB12}
\end{equation}
where for details we refer to Appendix~\ref{app:twocluster}. These expressions allow us to draw some conclusions about two-cluster states and, in particular, the influence of non-pairwise interactions.
\begin{itemize}
\item Firstly, we note that the factor of $\sin (\Psi/2)$ corresponds to there always being a solution $G(0)=0$: this corresponds the fully synchronous solution with symmetry $S_N$.
\item Secondly, in the special case $N$ even and $P=Q$ (so that $\alpha=0$), $B_1=B_2=0$ meaning that there is also a solution $G(\pi)=0$: this corresponds to a solution with symmetry $(S_P)^2\otimes \Z_2$.
\item Thirdly, loss of linear stability of synchrony is associated with change in sign of $G'(0)$. More precisely, synchrony is stable if $A_1+A_2<0$ and unstable if $A_1+A_2>0$.
\item Fourthly, if there is a root $G(\Psi)$ with $\Psi\neq 0$ mod $2\pi$ this is a non-trivial two cluster state. This bifurcates from the fully synchronous solution where $G'(0)=0$. The only possibility of there being NO non-trivial two-cluster solution is at such a bifurcation point, such that $G'(0)=0$ and $G(\Psi)$ has the same sign for all $\Psi\in(0,2\pi)$.
\end{itemize}

Moreover, the dependence on $\alpha$ of $A_i$ and $B_i$ means we can conclude the following result about the set of possible two-cluster states. Note that the case
\begin{equation}
\beta_{7}=\beta_{9}=\beta_{11}=0
\label{eq:pairwisebeta}
\end{equation}
corresponds to the case of there being only pairwise coupling in \eqref{principalEQ}.

\begin{theorem}
\label{thm:twoclusters}
For system \eqref{eq:Psidash} in the special case of pairwise coupling \eqref{eq:pairwisebeta} and any $\Psi_0\in(0,2\pi)$ either
\begin{itemize}
\item $G$ is independent of $\alpha$ and $G(\Psi_0)=0$ for all $\alpha\in (-1,1)$, or
\item $G(\Psi_0)=0$ for at most one $\alpha\in(-1,1)$.
\end{itemize}
In the more general case where \eqref{eq:pairwisebeta} does not hold, there can be an additional case
\begin{itemize}
\item $\Psi_0$ is a root of $G(\Psi)=0$ for two distinct $\alpha\in(-1,1)$.
\end{itemize}
Moreover, this case does appear for certain choices of parameters.
\end{theorem}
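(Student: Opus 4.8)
The plan is to regard $G(\Psi_0)=0$, for a fixed $\Psi_0\in(0,2\pi)$, as a polynomial equation in the single unknown $\alpha$. First I would factor the expression in \eqref{eq:Psidash}: since $\sin(\Psi_0/2)\neq 0$ on $(0,2\pi)$, writing $u=\Psi_0/2$ gives $G(\Psi_0)=0$ if and only if $\mathcal{B}(\Psi_0,\alpha):=A_1\cos u+B_1\sin u+A_2\cos 3u+B_2\sin 3u=0$. Reading off the $\alpha$-dependence from \eqref{eq:AB12}, the coefficients $A_1,A_2$ are even in $\alpha$ (carrying $\alpha^2$ only through $\beta_7,\beta_9,\beta_{11}$) and $B_1,B_2$ are odd (carrying $\alpha^3$ only through $\beta_{11}$), so $\mathcal{B}$ is a cubic in $\alpha$, say $\mathcal{B}=p_0+p_1\alpha+p_2\alpha^2+p_3\alpha^3$, whose coefficients are trigonometric polynomials in $u$ and in which $p_2,p_3$ are proportional to $\beta_7,\beta_9,\beta_{11}$.

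In the pairwise case \eqref{eq:pairwisebeta} the quadratic and cubic coefficients vanish identically, so $\mathcal{B}(\Psi_0,\alpha)=p_0(\Psi_0)+p_1(\Psi_0)\alpha$ is affine in $\alpha$; here $B_1=\alpha b_1$ and $B_2=\alpha b_2$ with $b_1,b_2$ independent of $\alpha$, and $p_1(\Psi)=b_1\sin u+b_2\sin 3u$. An affine equation has at most one root unless both coefficients vanish; hence $G(\Psi_0)=0$ holds for at most one $\alpha$, except when $p_0(\Psi_0)=p_1(\Psi_0)=0$, in which case it holds for every $\alpha$. To match the first alternative, note that the whole $\alpha$-dependence of $G$ enters through $\alpha\,p_1(\Psi)$, so $G$ is independent of $\alpha$ precisely when $p_1\equiv 0$ (equivalently $b_1=b_2=0$); since otherwise $p_1$ has only finitely many zeros, the ``all-$\alpha$'' branch coincides with $G$ being $\alpha$-independent once the non-generic parameter values at which $p_0$ and $p_1$ share an isolated common zero are excluded.

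For the general case I would exploit the genuine cubic structure: when \eqref{eq:pairwisebeta} fails, $\mathcal{B}(\Psi_0,\cdot)$ can have three real roots and hence two inside $(-1,1)$. To show the additional case really occurs I would give an explicit construction at $\Psi_0=\pi$ (so $u=\pi/2$), where $\cos u=\cos 3u=0$, $\sin u=1$, $\sin 3u=-1$, and therefore $\mathcal{B}(\pi,\alpha)=B_1-B_2$. From \eqref{eq:AB12} this equals $\alpha\,(c+d\alpha^2)$ with $d=R_*^2\beta_{11}\cos\gamma_{11}$ and $c$ a combination of the remaining coefficients. Switching off all couplings except $\beta_{11}$ and $\beta_{-1}$, and choosing their magnitudes and phases so that $-c/d\in(0,1)$, produces the roots $\alpha=0,\pm\sqrt{-c/d}$, all lying in $(-1,1)$, two of which are nonzero and distinct. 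Because $d\neq 0$ forces $\beta_{11}\neq 0$, these extra roots are created exactly by the non-pairwise term, whereas in the pairwise case $d=0$ leaves only $\alpha=0$.

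The degree bookkeeping from \eqref{eq:AB12} and the resulting affine dichotomy are routine; the main obstacle is the existence claim — producing two roots inside the bounded interval $(-1,1)$ with admissible coefficients — which the $\Psi_0=\pi$ reduction resolves by collapsing the cubic to the transparent form $\alpha(c+d\alpha^2)$. The one delicate point is the pairwise ``all-$\alpha$'' branch: at $\Psi_0=\pi$ one has $p_0(\pi)=0$ automatically and $p_1(\pi)=b_1-b_2$, so a nongeneric choice $b_1=b_2\neq 0$ would satisfy $G(\pi)=0$ for every $\alpha$ without $G$ being $\alpha$-independent, and the first alternative should be read with such measure-zero loci excluded.
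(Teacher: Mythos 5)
Your proposal is correct and its core is the same as the paper's: divide out the factor $2\sin(\Psi/2)\neq 0$, observe from \eqref{eq:AB12} that the remaining bracket is a polynomial in $\alpha$ which is affine under the pairwise condition \eqref{eq:pairwisebeta} (hence at most one root in $\alpha$ unless both coefficients vanish) and cubic in general, and then exhibit explicit coefficients with two roots in $(-1,1)$. The differences are in the existence construction and in one point of rigour. The paper works at $\Psi_0=\pi/2$ and simply prescribes $A_1=1/8+\alpha^2$, $B_1=-3\alpha/4$, $A_2=B_2=0$, obtaining the roots $\alpha\in\{1/4,1/2\}$; it never checks that these coefficient functions are attainable from \eqref{eq:AB12} (they are, but this needs some arranging, e.g.\ suppressing the $\alpha^{3}$ term of $B_1$ forces $\beta_{11}\cos\gamma_{11}=0$, while $A_2\equiv 0$ constrains $\beta_6,\beta_7$). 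Your construction at $\Psi_0=\pi$, where the bracket collapses to $B_1-B_2=\alpha\left(c+d\alpha^{2}\right)$ with $d=R_*^{2}\beta_{11}\cos\gamma_{11}$, is manifestly realizable with only $\beta_{-1}$ and $\beta_{11}$ switched on, which is cleaner on this point. The trade-off is that at $\Psi_0=\pi$ the root $\alpha=0$ is forced ($B_1,B_2$ are odd in $\alpha$), and your two nonzero roots $\pm\sqrt{-c/d}$ are exchanged by the cluster-relabelling symmetry $(\Psi,\alpha)\mapsto(2\pi-\Psi,-\alpha)$, which maps $\Psi_0=\pi$ to itself; so they describe one physical two-cluster state with the labels swapped, whereas the paper's roots $1/4$ and $1/2$ give two genuinely different cluster sizes, which is closer to the coexistence phenomenon advertised in the abstract. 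Still, your example satisfies the theorem as literally stated, and the root configuration $\{0,\pm\sqrt{-c/d}\}$ is impossible under \eqref{eq:pairwisebeta}, where the root set at $\Psi_0=\pi$ is either $\{0\}$ or all of $(-1,1)$. Finally, your closing caveat is a genuine catch rather than a defect of your argument: in the pairwise case with $b_1=b_2\neq 0$ one has $G(\pi)=0$ for every $\alpha$ although $G$ is not independent of $\alpha$, so the first alternative of the theorem --- and the paper's own dichotomy ``either $B_1=B_2=0$ for all $\alpha$ \dots\ or there is precisely one real $\alpha$'' --- is slightly too strong as stated; the paper's proof passes over exactly this degenerate locus, which you correctly identify and propose to exclude.
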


\proof
This follows on noting that if there is pairwise coupling then the dependence on $\alpha$ is $A_1=a_1$, $B_1=\alpha b_1$, $A_2=a_2$, $B_2=\alpha b_2$. This means that non-trivial roots $\Psi_0$ of $G(\Psi)=0$, from \eqref{eq:AB12} must satisfy
$$
\alpha \left(b_1\sin \frac{\Psi}{2}+b_2 \sin \frac{3\Psi}{2}\right)=-a_1\cos \frac{\Psi}{2}+a_2 \cos \frac{3\Psi}{2}
$$
and so either $B_1=B_2=0$ for all $\alpha$ and $\Psi_0$ is a root of the right hand side, or there is precisely one real $\alpha$ that satisfies this equation. If this quantity satisfies $-1<\alpha<1$ then there is a corresponding two cluster state with close to this phase difference, for large enough $N$ (exactly this phase difference if $\alpha$ is rational).

On the other hand, if \eqref{eq:pairwisebeta} is not satisfied there may be quadratic dependence of $G$ on $\alpha$ leading to the possibility of two $\alpha$. For example, consider the specific case where the normal form coefficients are such that $A_1=1/8+ \alpha^2$, $B_1=-3\alpha/4$ and $A_2=B_2=0$. This has nontrivial roots $\Psi$ where
$$
-\frac{3}{4} \alpha \sin \frac{\Psi}{2} + \left(\frac{1}{8}+ \alpha^2\right) \cos \frac{\Psi}{2},
$$
namely there are roots at $\Psi=\pi/2$ when $\alpha^2 -3\alpha/4+1/8=0$. This implies there are two cluster states with this phase difference for the two isolated values
$$
\alpha\in\{1/2,1/4\}.
$$
\qed

~

Theorem~\ref{thm:twoclusters} highlights a particular restriction in the existence of two-cluster states that holds for very general systems of the form \eqref{eq:TNkuramoto}. Only on addition of additional interaction terms of the form shown in \eqref{principalEQ},\eqref{eq:coupform} do we start to find the sort of behaviour one would expect of a generic symmetric system on $\T^N$.

\section{Discussion}
\label{sec:discussion}

In summary, we study a system of $N$ identical systems near  generic Hopf bifurcation that are symmetrically and weak coupled. In such a case, reduction to a phase description will be possible for some neighbourhoor, but a pairwise-coupling models such as \eqref{eq:TNkuramoto} may miss a number of qualitatively different terms. 

Note (\ref{principalEQ},\ref{eq:coupform}) can be approximated by (\ref{eq:TNkuramoto},\ref{eq:kuramotocoupling}) but this approximation and the true solution may move apart over a timescale of order $O(\epsilon^{-1}\lambda^{-1})$. The next approximation includes two, three and four-phase interactions, from cubic nonlinearities in the equations and this will be valid for the longer timescale $O(\epsilon^{-1}\lambda^{-2})$. These terms will be important especially near a secondary bifurcation where well-known degeneracies of (\ref{eq:TNkuramoto},\ref{eq:kuramotocoupling}) will be unfolded. 

We cannot guarantee that the truncation (\ref{principalEQ2trun}) has the same qualitative dynamics as (\ref{principalEQ2}) unless the dynamics of the former is robust to addition of higher order terms. Although $\lambda$ is small, we assume $\epsilon$ must be smaller in order for the reduction to hold - in the event that this does not hold then there may be solutions where the amplitude of the different oscillators may vary considerably, and the picture of possible dynamics may be much richer \cite{DR05}. In addition to the restrictions on the number of phases interacting, the form of the functions is illuminating - \eqref{eqcoups} reveals that while $g_2$ has first and second harmonics, $g_3$-$g_5$ have only first harmonics to this lowest order. 

The presence of the phase-dependent frequency detuning $\tilde{\Omega}(\phi,\epsilon)$ in \eqref{principalEQ} is somewhat surprising. This term is invariant under permutations of the arguments of $\phi$ and so does not give any effect in the phase difference dynamics and hence on the synchrony properties of the system. However, it will be a measurable effect, for example, affecting the frequencies of different states of synchrony that will be missing from the system (\ref{eq:TNkuramoto}).

If we look at the special case of {\em pairwise} (or the even more special case of {\em linear}) coupling of nonlinear systems undergoing Hopf bifurcation, of the form
$$
\FD{}{t} {z}_1= f(z_1)+ \epsilon \sum_{j=1}^{N}  g(z_1,z_j)
$$
then clearly this will give $g_3$, $g_4$ and $g_5$ identically zero. While the $g_2$ terms in \eqref{principalEQ} may be quite complex and can have higher Fourier modes - previous work on this pairwise coupling has demonstrated present of a rich range of behaviours including robust attracting heteroclinic cycles (slow switching) \cite{AshSwi91} and arbitrary cluster states \cite{Dai96,Oro2009}. We remark that terms present in $g_2$ are also present in the work of Hansel {\em et al} \cite{Han93} and the generalization \cite{AOWT07}.

The example we give in Theorem~\ref{thm:twoclusters} is a new but  subtle dynamical effect that can appear for non-pairwise coupling. We expect there are more remarkable implications of non-pairwise coupling waiting to be discovered. For example, there are systems of pairwise coupled system of the form (\ref{eq:TNkuramoto}) with $N=4$ or more identical oscillators that possess chaotic attracting states, at least for $g(\phi)$ with least four harmonics \cite{Bick-et-al}. It is still unknown whether there is a $g_2$ that gives chaotic attractors for all sufficiently large $N$. We speculate that the additional terms $g_3$-$g_5$ may give broader regions of existence of chaotic attractors for (\ref{principalEQ},\ref{eq:coupform}) using coupling functions with fewer harmonics.

We finish by mentioning a couple of examples from the literature that have non-pairwise coupling. In \cite{RosPik2007,PikRos2009,BurPik11} the authors consider coupled phase oscillators of the form
\begin{equation}\label{KuraR}
\dfrac{d}{dt} {\phi}_1= \omega-\frac{K}{N}\sum_{k=1}^{N} \sin( \phi_1-\phi_k + \alpha R)
\end{equation}
where the order parameter is
$$
R= \left| \frac{1}{N}\sum_{\ell=1}^{N} \exp i\phi_\ell \right|.
$$
Writing  \eqref{KuraR} in terms of phases only we find
\begin{equation}
\label{KuraR2}
\dfrac{d}{dt} {\phi}_1= \omega-\frac{K}{N}\sum_{k=1}^{N} \sin\left( \phi_1-\phi_k + \alpha 
\sqrt{\frac{1}{N^2}\sum_{k,\ell=1}^{N} \exp i(\phi_\ell-\phi_k)}\right)
\end{equation}
which includes non-pairwise terms that include all $N$ phases. This can be derived \cite{RosPik2007} by assuming that the coupling is via a dynamic mean field - note that this justification assumes there is an extra active degree of freedom in the coupling. In this paper we assume the only dynamic variables are those undergoing Hopf bifurcation. Close in spirit to our paper is the analysis of \cite{KomPik2013} who consider three communities of oscillators with frequencies $\omega_i$, $i=1,2,3$ such that $\omega_1+\omega_2\approx \omega_3$. They find three-phase interactions in the phases of the order parameters, using an Ott-Antonsen reduction \cite{OttAnt2008}. By comparison, we are dealing simply with the oscillator phases, and the general form of our equations means that we are unable to apply the method of \cite{OttAnt2008}.

%

\appendix

\newpage

\section{Coefficients for two cluster states}
\label{app:twocluster}

Note that if we write $p=P/N$ and $q=Q/N$ then (\ref{eqcoups}) implies that for (\ref{eqPsidot}) we have
\begin{equation*}
\begin{split}
H_1 (\phi_1,\phi_2,Q,P) = &  \beta_{-1} [q \cos \gamma_{-1}+p \cos (\gamma_{-1}-\phi_1+\phi_2)]\\
&+R_*^2\left[ \beta_2 [q  \cos (\gamma_2)+ p  \cos (\gamma_2+ \phi_1-\phi_2)]\right.  \\
& +\beta_3 [ q  \cos (\gamma_3)+  p  \cos  (\gamma_3+ \phi_2-\phi_1)] +
 \beta_4 (p+q)   \cos \gamma_4 \\
& +\beta_5  [ (p^2+q^2) \cos \gamma_5 + pq \cos (\gamma_5 + (\phi_2-\phi_1))+ pq \cos (\gamma_5 + (\phi_1-\phi_2))] \\
& +\beta_6     [ q \cos \gamma_6 + p \cos (\gamma_6+ 2 (\phi_2- \phi_1))   ] \\
& +\beta_7    [ q^2 \cos (\gamma_7) + 2pq \cos (\gamma_7+ (\phi_2- \phi_1)) + p^2 \cos (\gamma_7+ 2 (\phi_2- \phi_1))  ]   \\
& +\beta_8   (q \cos \gamma_8 + p \cos (\gamma_8+ \phi_2 - \phi_1) )  \\
& +\beta_9  [ q^2 \cos \gamma_9 + pq \cos (\gamma_9+ 2 (\phi_2- \phi_1)) \\
& + pq \cos (\gamma_9- \phi_2 + \phi_1) + p^2  \cos (\gamma_9 +  \phi_2- \phi_1)]  \\
& +\beta_{10}   ( q \cos \gamma_{10} + p \cos (\gamma_{10}+\phi_2 - \phi_1) )  \\
& +\beta_{11}  [q^3 \cos \gamma_{11} + 2 pq^2 \cos(\gamma_{11} + \phi_2 - \phi_1) + qp^2 \cos(\gamma_{11} + 2(\phi_2 - \phi_1))\\
& \left.+pq^2 \cos(\gamma_{11} + \phi_1 - \phi_2)+ 2qp^2 \cos \gamma_{11} +p^3 \cos(\gamma_{11} + \phi_2 - \phi_1)\right].
\end{split}
\end{equation*}
and $H_2 (\phi_1,\phi_2,Q,P) =  H_1 (\phi_2,\phi_1,P,Q)$ and so if we set $\Psi := \phi_1-\phi_2$ then
\begin{equation}
\FD{}{t} \Psi = \epsilon[H_1(\phi_1,\phi_2,Q,P)-H_2 (\phi_1,\phi_2,Q,P)]=:\epsilon G(\Psi).
\end{equation}
This can be written
\begin{equation}
\begin{split}
G(\Psi) = & \beta_{-1}[(q-p)\cos \gamma_{-1}+p\cos (\gamma_{-1}-\Psi)-q\cos (\gamma_{-1}+\Psi)]\\
& +R_*^2\left[\beta_2 [(q-p)  \cos \gamma_2+ p \cos (\gamma_2+ \Psi)- q \cos (\gamma_2 - \Psi)]\right.    \\
& +\beta_3  [(q-p)  \cos \gamma_3+  p  \cos  (\gamma_3 - \Psi)- q \cos  (\gamma_3+\Psi) ] \\
& +\beta_6   [(q-p) \cos \gamma_6 + p \cos (\gamma_6 - 2 \Psi) - q \cos (\gamma_6+ 2 \Psi )   ]  \\
& +\beta_7  [  (q^2-p^2) \cos \gamma_7 + 2pq \cos (\gamma_7 - \Psi) - 2pq \cos (\gamma_7+ \Psi)]\\
& +\beta_7  [ p^2 \cos (\gamma_7 - 2 \Psi) - q^2 \cos (\gamma_7+ 2 \Psi )  ]   \\
& +\beta_8  [(q-p) \cos \gamma_8 + p \cos (\gamma_8 - \Psi) - q \cos (\gamma_8+ \Psi) ]   \\
& +\beta_9   [(q^2-p^2) \cos \gamma_9 + p^2 \cos (\gamma_9 - \Psi) - q^2 \cos (\gamma_9+\Psi ) ] \\
& +\beta_{10} [( q-p) \cos \gamma_{10} + p \cos (\gamma_{10}-\Psi) ) - q \cos (\gamma_{10}+\Psi) )  ]\\
& +\beta_{11} [(q^3- 2pq^2+2qp^2 - p^3)  \cos \gamma_{11}] \\
& +\beta_{11} [(2 pq^2-qp^2+p^3) \cos(\gamma_{11} -\Psi)]\\
& +\beta_{11} [(-q^3+pq^2-2 qp^2) \cos(\gamma_{11} + \Psi)] \\
& \left.+\beta_{11}[ qp^2 \cos(\gamma_{11} -2 \Psi) - pq^2 \cos(\gamma_{11} + 2 \Psi)]\right].
\end{split}
\end{equation}
Now writing $1-\cos \Psi = 2\sin^2 (\Psi/2)$, $\sin \Psi= 2 \sin (\Psi/2)\cos (\Psi/2)$ and $1-\cos 2\Psi = 2\sin^2 \Psi$ we obtain expressions 
\begin{equation}
\begin{split}
G(\Psi):&= A_1 \sin \Psi + B_1 (1-\cos \Psi) \\
 &+ A_2 (\sin 2\Psi-\sin \Psi) + B_2 (\cos \Psi- \cos 2 \Psi) \\
&= 2 \sin \frac{\Psi}{2} \left [ A_1 \cos \frac{\Psi}{2} + B_1 \sin \frac{\Psi}{2} + A_2 \cos \frac{3\Psi}{2} + B_2 \sin \frac{3\Psi}{2}\right]\\
\end{split}
\end{equation}
where $A_i,B_i$, $i=1,2$ depend on $p,q$ and the $\beta_j$, $\gamma_j$ for $j=-1,2,3,6,\ldots,11$ as follows:
\begin{equation}
\begin{split}
A_1 & := \beta_{-1}(p+q)\sin \gamma_{-1} +R_*^2[-\beta_{2}(p+q)\sin \gamma_{2} \\
& + \beta_{11} (p^3 +2 p^2 q +2  p q^2 + q^3)\sin\gamma_{11} + \beta_{7} (p^2 +4 p q + q^2 )\sin\gamma_{7}\\
&+\beta_{9} (p^2 + q^2) \sin\gamma_{9}+ \beta_{10} (p+ q) \sin\gamma_{10} + \beta_{2}( p + q) \sin\gamma_{2}\\
&+ \beta_{3} (p+ q) \sin\gamma_{3}+ \beta_{6}( p+q) \sin\gamma_{6}+ \beta_{8}( p+q) \sin\gamma_{8}], \\
B_1 & :=  \beta_{-1}(q-p)\cos\gamma_{-1}
+R_*^2[\beta_{2}(q-p)\cos \gamma_{2} \\
& + \beta_{11}( -p^3 +2 p^2 q -2 p q^2 +q^3) \cos\gamma_{11}+ \beta_{7}(q^2 -p^2) \cos\gamma_{7}\\
& + \beta_{9}(q^2- p^2) \cos\gamma_{9}+ \beta_{10}(q-p) \cos\gamma_{10}+ \beta_{2}(q- p) \cos\gamma_{2}\\
&+ \beta_{3}(q-p) \cos\gamma_{3}+ \beta_{6}(q- p) \cos\gamma_{6}+ \beta_{8}(q-p) \cos\gamma_{8}], \\
A_2 & :=  R_*^2[\beta_{11}( p^2 q +pq^2) \sin\gamma_{11}+ \beta_{7}( p^2+ q^2) \sin\gamma_{7} + \beta_{6}( p+ q) \sin\gamma_{6}],\\
B_2 & :=  R_*^2[\beta_{11} (pq^2-p^2 q) \cos\gamma_{11}+ \beta_{7}(q^2- p^2) \cos\gamma_{7}+ \beta_{6}(q- p) \cos\gamma_{6}].
\end{split}
\end{equation}
Finally, we define $\alpha=q-p$ with $\alpha\in(-1,1)$ so that $p=(1+\alpha)/2$, $q=(1-\alpha)/2$ which gives the expressions in \eqref{eq:AB12}.

~

\newpage

\begin{figure}%
\includegraphics[width=\columnwidth,clip=]{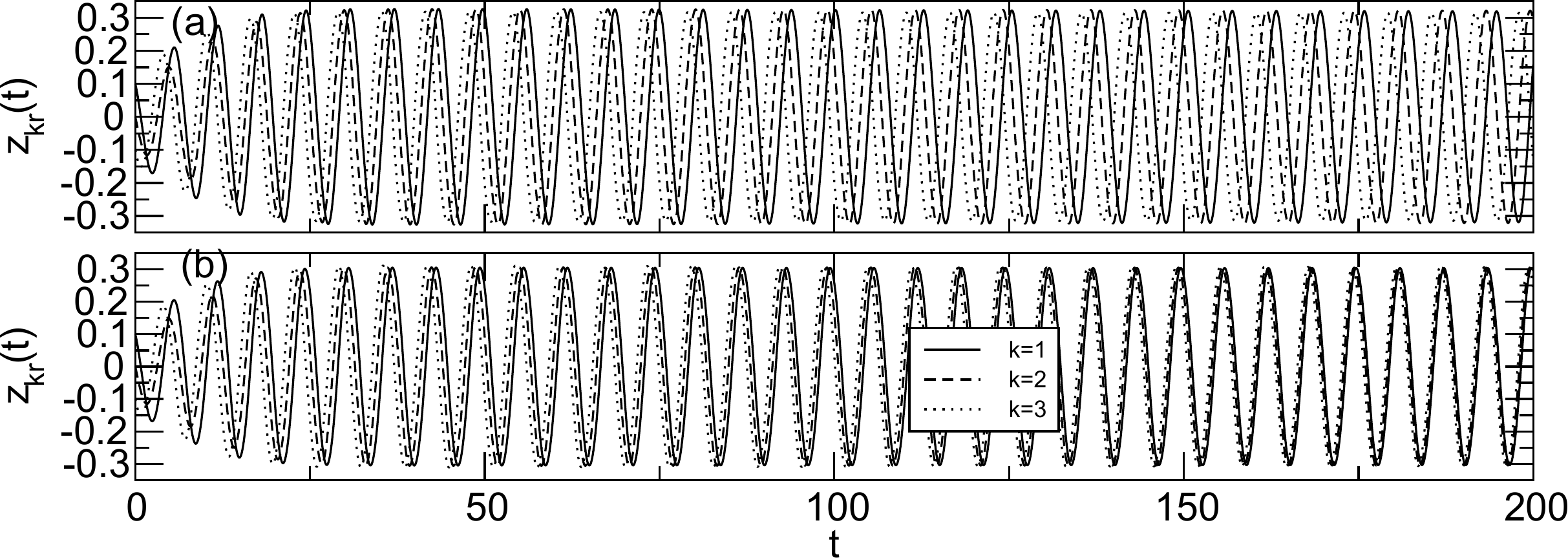}%
\caption{Example time series for coupled oscillators (\ref{eq:inv5}) showing the real parts $z_{kr}(t)$ against $t$ for $N=3$ and two different parameter sets; (a) shows evolution to an anti-phase solution for parameters \eqref{eq:parameg} while (b) shows evolution to an in-phase solution for the same parameters except $a_{2r}=-0.3$.}%
\label{fig:examplets}%
\end{figure}

\begin{figure}%
\includegraphics[width=\columnwidth,clip=]{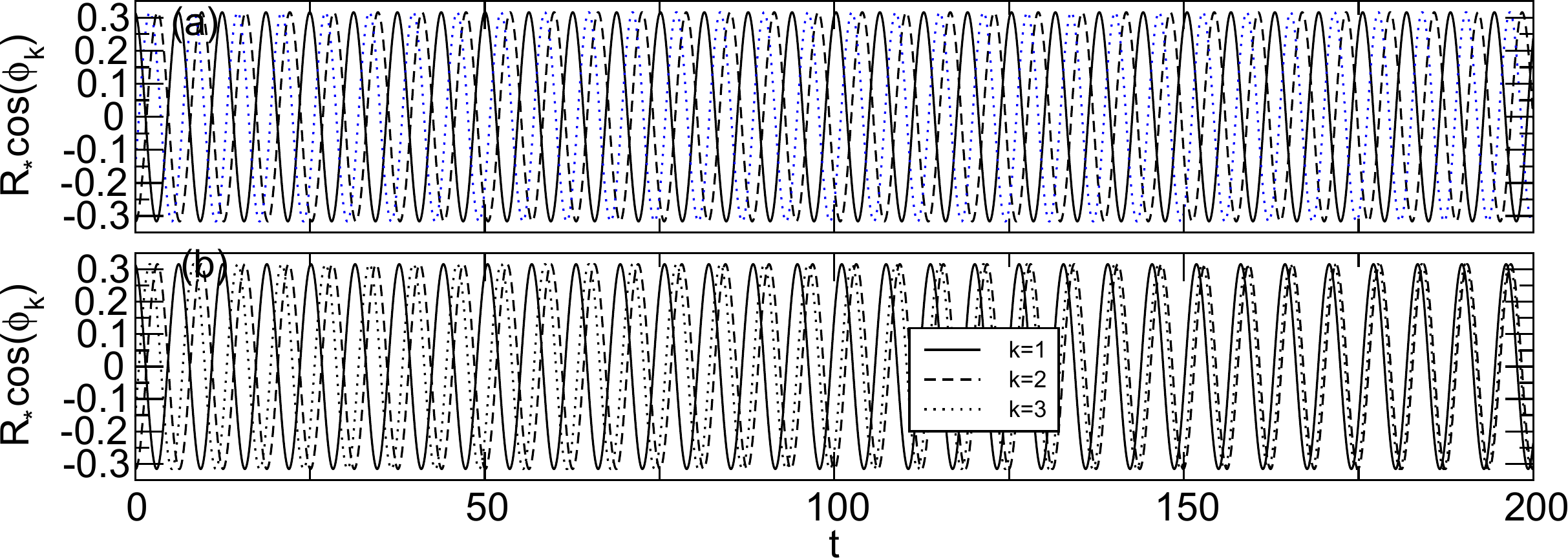}%
\caption{Example time series for a phase approximation (\ref{principalEQ}) of the system in Figure~\ref{fig:examplets}. It shows $R_*\cos(\phi_k(t))$ against $t$ for $N=3$ and two different parameter sets; observed that (a) and (b) show similar qualitative and quantitative dynamics to the corresponding plots in Figure~\ref{fig:examplets}.}%
\label{fig:examplereducedts}%
\end{figure}

\end{document}